\theoremstyle{definition} 
\newtheorem{theorem}{Theorem}[section]
\newtheorem{lemma}[theorem]{Lemma}
\newtheorem{proposition}[theorem]{Proposition}
\newtheorem{corollary}[theorem]{Corollary}
\newtheorem{definition}[theorem]{Definition}
\newtheorem{example}[theorem]{Example}
\numberwithin{equation}{section}
\newtheorem*{theorem*}{Theorem}
\DeclareMathOperator{\sh}{sh}
\renewcommand{\P}{\text{P}}
\DeclareMathOperator{\Q}{Q}
\newcommand{\run}{\textnormal{\textsf{run}}} 
\newcommand{\rw}[1]{\left[{#1}\right]} 
\newcommand{\supp}{\textnormal{\textsf{supp}}} 
\newcommand{\canonicalword}[1]{\textnormal{\textsf{canon}}(#1)} 
\newcommand{\Rowone}{{\sf{{Row}}}_1}
\newcommand{\Rowtwo}[1]
{{\sf{{Row}}}_2(\P(#1))}
\newcommand{\RowTwo}
{{\sf{Row}}_2}
\newcommand{\RowtwoQ}[1]
{{\sf{{Row}}}_2(\Q(#1))}
\DeclareMathOperator{\Incr}{\textcolor{red}{Inc}}
\DeclareMathOperator{\Decr}{\textcolor{red}{Dec}}
\title{Runs and RSK tableaux of boolean permutations}
\author[Gunawan]{Emily Gunawan$^*$ \orcidlink{0000-0001-5056-0792}} 
\address{Department of Mathematics and Statistics, University of Massachusetts Lowell, Lowell, MA, USA}
\email{emily$\_$gunawan@uml.edu}
\thanks{$^*$Research partially completed at the Isaac Newton Institute for Mathematical Sciences  during the programme Cluster algebras and representation theory (supported by EPSRC Grant Number EP/R014604/1).}
\author[Pan]{Jianping Pan \orcidlink{0000-0001-6722-8091}} 
\address{Department of Mathematics, North Carolina State University, Raleigh, NC, USA}
\email{jpan9@ncsu.edu}
\author[Russell]{Heather M. Russell \orcidlink{0000-0003-1256-2150}} 
\address{Department of Mathematics and Statistics, University of Richmond, Richmond, VA, USA}
\email{hrussell@richmond.edu}
\author[Tenner]{Bridget Eileen Tenner$^\dagger$ \orcidlink{0000-0003-0150-9653}}
\address{Department of Mathematical Sciences, DePaul University, Chicago, IL, USA}
\email{bridget@math.depaul.edu}
\thanks{$^\dagger$Research partially supported by NSF Grant DMS-2054436 and Simons Foundation Collaboration Grant for Mathematicians 277603.}
\subjclass[2020]{Primary 05A05; Secondary 20F55, 06A07, 05A19.}
\begin{document}

\begin{abstract}
We define and construct the ``canonical reduced word'' of a boolean permutation, and show that the RSK tableaux for that permutation can be read off directly from this reduced word. We also describe those tableaux that can correspond to boolean permutations, and enumerate them. In addition, we generalize a result of Mazorchuk and Tenner, showing that the ``run'' statistic influences the shape of the RSK tableau of arbitrary permutations, not just of those that are boolean.\\

\noindent \emph{Keywords:} boolean permutation, Robinson--Schensted--Knuth correspondence, permutation pattern, reduced word, run
\end{abstract}

\maketitle

\section{Introduction}

In 1961, Schensted showed that the first part of an RSK partition
$(\lambda_1(w),\lambda_2(w),\dots)$ 
is equal to the length of a longest increasing subsequence in a permutation~\cite{Sch61}. In 1974, Greene generalized Schensted’s result, showing that the RSK partition of a permutation records the numbers of disjoint unions of increasing sequences of the permutation~\cite{Gre74}. As he pointed out in his paper, this result was ``somewhat surprising,'' since there was no concrete interpretation of each individual part of the RSK partition below the first part.

The boolean elements in a Coxeter group are those elements whose principal order ideals in the Bruhat order are isomorphic to boolean algebras. This is an important class of elements, established in \cite{tenner-patt-bru}, which has beautiful topological (\cite{DEP21,Kar13, ragnarsson-tenner-homotopy, ragnarsson-tenner-homology}) and representation theoretic (\cite{MT22}) properties. 
Previous work established that the RSK shape of a boolean permutation has at most two rows. 
Recently, Mazorchuk and Tenner \cite[Theorem~6.4]{MT22} showed that the size of the RSK partition of a boolean permutation excluding the first row equals its run statistic, a statistic on the reduced words of a permutation.  
In Theorem~\ref{thm:run and lambda1} of the present paper, we generalize this result to all permutations, as follows. 
\begin{theorem*} 
For \emph{any} permutation $w \in S_n$, we have
$$\lambda_1(w) + \run(w) = n.$$
\end{theorem*}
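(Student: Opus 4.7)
The plan is to use Schensted's theorem to rewrite the claim in purely combinatorial terms. By Schensted's theorem cited in the introduction, $\lambda_1(w)$ equals the length of a longest increasing subsequence in $w$, call it $L(w)$, so the identity becomes
\[
\run(w) \;=\; n - L(w),
\]
which I will establish by proving both inequalities.

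For $\run(w) \le n - L(w)$, I would fix a longest increasing subsequence of $w$ of length $L(w)$ and construct a reduced expression for $w$ whose run count is at most the number $n - L(w)$ of indices not in the chosen subsequence. The intuition is that the $L(w)$ entries of the subsequence form an increasing ``backbone'' that need not be shuffled among themselves, while each of the remaining $n - L(w)$ entries contributes a single run when it is braided past that backbone into its final position. The canonical reduced word construction of the earlier section, together with a Lehmer-code style bookkeeping, is a natural starting point for producing such a reduced word in the general (non-boolean) setting. For the reverse inequality $\run(w) \ge n - L(w)$, I would take a reduced expression realizing $\run(w)$ runs and extract from its run structure an increasing subsequence of $w$ of length at least $n - \run(w)$: roughly, the values in $w$ that remain ``unmoved'' by the minimal family of runs should, in the correct formulation, form an increasing sequence in one-line notation.

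The main obstacle will be handling reduced words with repeated letters, a phenomenon absent in the boolean case treated by Mazorchuk--Tenner, where the bijection between runs and non-backbone entries is essentially transparent. A plausible alternative route is induction on the Coxeter length $\ell(w)$: right multiplication by a simple reflection $s_i$ changes $\lambda_1(w)$ by $0$ or $\pm 1$ and should change $\run(w)$ by the opposite amount, so the task reduces to showing these local changes are compatible. Concretely, this amounts to verifying that one can always select a reduced word for $w$ whose last letter simultaneously witnesses the correct behavior of both sides — this compatibility, rather than either inequality in isolation, is where I expect the real combinatorial work to lie.
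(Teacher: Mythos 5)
Your overall framing matches the paper's: both reduce the identity via Schensted's theorem to $\run(w) = n - L(w)$ where $L(w)$ is the length of a longest increasing subsequence, and both split this into two inequalities. Your sketch of the easy direction $\run(w) \ge n - L(w)$ is sound and is essentially what the paper does (each run acts as an Ulam move, deleting one entry and reinserting it, so it can increase the length of a longest increasing subsequence by at most one; the paper imports this as Proposition~\ref{prop:lower bound on runs} from \cite{MT22}, whose proof works verbatim for arbitrary permutations).

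The genuine gap is in the hard direction $\run(w) \le n - L(w)$: you describe the ``backbone'' intuition but never construct the reduced word, and the construction is where all the content lies. The paper does this with an explicit map $\rho$ (Definition~\ref{defn:the map rho}): take the lexicographically least longest increasing subsequence, let $q$ be the smallest value missing from it, and multiply $w$ by a single run of consecutive simple reflections --- on the right if $q=1$ or if $q$ sits to the right of $q-1$, but on the \emph{left} otherwise --- so as to slide $q$ adjacent to $q-1$ (or to the front). Three verifications are then needed, none of which your sketch supplies: (i) the run is length-subtractive, $\ell(\rho(w)) + \ell(\rw{r}) = \ell(w)$, which is what guarantees that concatenating the runs produced by iterating $\rho$ yields a genuinely \emph{reduced} word (recall $\run$ is defined over reduced words; the paper only shows afterward, using the theorem itself, that the reducedness requirement can be dropped); (ii) $\lambda_1$ strictly increases under $\rho$; and (iii) $\run(w) \le \run(\rho(w)) + 1$. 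The left-versus-right case distinction is not cosmetic: when $q$ lies to the left of $q-1$, no single right-multiplication by a run moves $q$ into the backbone while reducing length, which is why your ``braid each non-backbone entry past the backbone'' picture does not directly compile into a proof. Your alternative route --- induction on $\ell(w)$ by a single simple reflection --- is weaker still, since one factor $s_i$ need not change $\run(w)$ at all, and you correctly suspect that the required compatibility is the hard part; the paper sidesteps this precisely by inducting on whole runs rather than single reflections. Finally, the ``canonical reduced word construction'' you invoke is defined only for boolean permutations and is not available in this general setting.
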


Our result gives a concrete interpretation to the sum of all individual parts below the first row; in the case of fully commutative permutations, this gives meaning to the length of the second row and, by doing so, takes a step to address the missing meaning mentioned in~\cite{Gre74}.

Theorem~\ref{thm:run and lambda1} demonstrates a connection between the Coxeter-perspective and the pattern-\break perspective of permutations, via the RSK correspondence. 
From there, we focus exclusively on boolean permutations, defining a ``canonical reduced word,'' which we can construct in two (equivalent) ways. 
Not only does this word demonstrate the necessary run statistic, but we show, in fact, that it directly determines the entire RSK tableaux of the boolean permutation --- without using the insertion algorithm.

The relationship between this canonical reduced word and the tableaux can be exploited further, allowing us to characterize precisely which tableaux are the RSK insertion and recording tableaux of boolean permutations. In particular, only those tableaux that we call ``uncrowded'' can correspond to these permutations under RSK. The uncrowded tableaux, themselves, have interesting combinatorics, as we demonstrate via their enumeration.

The paper is organized as follows. In Section~\ref{sec:background}, we introduce notation and terminology that we will use throughout the paper. We also review heap posets of boolean permutations, RSK insertion, and runs. In Section~\ref{sec:runs.and.rs.partitions}, we give a concrete interpretation of the run statistic in terms of the shape of arbitrary permutations in Theorem~\ref{thm:run and lambda1}. 
Section~\ref{sec.second.row} introduces the canonical reduced word of a boolean permutation, which can be defined (and constructed) from either an arbitrary reduced word or from its heap. 
The canonical word is used to construct the corresponding insertion and recording tableaux in Theorem~\ref{thm:boolean second row}, and it demonstrates the run statistic (Corollary~\ref{cor:canonicalword is optimal run word}). 
In Section~\ref{sec:Characterizing boolean insertion tableaux}, we characterize tableaux that are RSK tableaux of boolean permutations, which we call ``uncrowded'' tableaux. This is done in  Corollaries~\ref{cor:second row rules for balanced tableaux} and~\ref{cor:second row rule for recording tableaux}. We conclude with Section~\ref{sec:counting uncrowded tableaux}, enumerating the uncrowded tableaux via a bijection with binary words in which each maximal block of $1$s has odd length.

\section{Background and notation}\label{sec:background}
Let $S_n$ be the symmetric group on $n$ elements. 
We represent permutations of $S_n$ in \emph{one-line notation} as $w = w(1)w(2)\cdots w(n)$. For each $i \in \{1, \ldots, n-1\}$, let $s_i \in S_n$ denote the \emph{simple reflection} (also called an \emph{adjacent transposition}) 
swapping $i$ and $i+1$, and fixing all other letters. The simple reflections generate $S_n$, meaning that every $w \in S_n$
can be decomposed as a product $w = s_{i_1} \cdots s_{i_\ell}.$ 
The minimum $\ell$ among all such decompositions for $w$ 
is the \emph{(Coxeter) length} of $w$, denoted $\ell(w)$. In our proofs, we will make use of the fact that $\ell(w)$ is also the number of \emph{inversions} in the one-line notation for $w$ where an inversion is a pair of positions $i<j$ such that $w(i)>w(j)$.
An expression $w=s_{i_1} \cdots s_{i_{\ell(w)}}$ is called
a \emph{reduced decomposition} of $w$.   We ease this notation by writing such a decomposition as the \emph{reduced word} $\rw{i_1 \cdots i_{\ell(w)}}$. Let $R(w)$ denote the set of reduced words for $w$.

The \emph{support} $\supp(w)$ of a permutation $w$ is the set of letters appearing in reduced words of $w$. Although  simple reflections are subject to the Coxeter relations, this does not change the set of reflections appearing in any reduced decomposition, so $\supp(w)$ is well defined. 
A permutation $w \in S_n$ has \emph{full support} if $\supp(w) = \{1,\ldots, n-1\}$. 
For example, let  $w = 51342 = s_{4}s_{2}s_{3}s_{2}s_{4}s_{1} \in S_5.$ The permutation $w$ has six inversions, so $\ell(w)=6$ and $\rw{423241}\in R(w)$. Since $\supp(w) = \{1,2,3,4\}$, we conclude that $w$ has full support.

\subsection{Fully commutative permutations and boolean permutations}

Let $w\in S_n$ and $\sigma\in S_m$ with $m\leq n$. The permutation $w$ is said to \emph{contain the pattern} $\sigma$ if $w$ has a (not necessarily contiguous) subsequence
whose elements are 
in the same relative order as $\sigma$.  If $w$ does not contain $\sigma$, we say that $w$ \emph{avoids} $\sigma$. For example, 
$w$ = 314592687 contains the pattern 1423 because the subsequence 4968 (among others) is ordered in the same way as 1423. On the other hand, $w$ avoids 3241 since it has no subsequence ordered in the same way as 3241. Note also that each inversion of a permutation is an instance of a 21-pattern.

Simple reflections satisfy \emph{commutation relations} of the form $s_i s_j=s_j s_i$ when $|i-j| > 1$. An application of a commutation relation is called a \emph{commutation move}. 
When referring to reduced words, 
we will say adjacent letters $i$ and $j$ in a reduced word \emph{commute} when $|i - j| > 1$. Given a reduced word $\rw{s}$ of a permutation, the equivalence class consisting of all words that can be obtained from $\rw{s}$ by a sequence of commutation moves is the \emph{commutation class} of $\rw{s}$.
A permutation whose set of reduced words forms a single commutation class is called \emph{fully commutative}. The following proposition characterizes fully commutative permutations in terms of pattern avoidance.

\begin{proposition}[\cite{billey-jockusch-stanley}]
\label{prop:tfae fully commutative}
Let $w$ be a permutation.
The following are equivalent:
\begin{itemize}
\item $w$ is fully commutative,
\item $w$ avoids the pattern 321,
\item no reduced word of $w$ contains $i(i+1)i$ as a factor, for any $i$, and 
\item no reduced word of $w$ contains $(i+1)i(i+1)$ as a factor, for any $i$.
\end{itemize}
\end{proposition}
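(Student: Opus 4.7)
The plan is to prove a chain of equivalences among the four conditions. First, (3) $\Leftrightarrow$ (4) is immediate: a single braid move transforms the factor $i(i+1)i$ into $(i+1)i(i+1)$ and vice versa, so one occurs in some reduced word precisely when the other does. Next, for (1) $\Leftrightarrow$ (3), I would invoke Matsumoto's theorem that $R(w)$ is connected under commutation and braid moves. Commutation moves preserve the multiset of letters of a reduced word, while braid moves do not (they exchange one copy of $s_i$ for a copy of $s_{i\pm 1}$). Thus no braid move is ever applicable exactly when all reduced words share a common multiset and lie in a single commutation class, which happens exactly when $w$ is fully commutative.

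For (2) $\Leftrightarrow$ (3), the proof would proceed by induction on $\ell(w)$, using the following core calculation: extending a reduced word for some $u \in S_n$ by $s_i s_{i+1} s_i$ (or by $s_{i+1} s_i s_{i+1}$) on the right remains reduced if and only if $u(i) < u(i+1) < u(i+2)$, and in that case the product has values $u(i+2), u(i+1), u(i)$ at the consecutive positions $i, i+1, i+2$, producing a 321 pattern. This is verified by iterating the right-descent criterion $\ell(v s_k) = \ell(v) + 1 \Leftrightarrow v(k) < v(k+1)$.

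For (2) $\Rightarrow$ (3), argue the contrapositive: suppose $w$ has a reduced word $\mathbf{w}$ containing a braid factor. Its final letter $s_j$ is a right descent of $w$, and removing it yields a reduced word $\mathbf{w}'$ of $w' = ws_j$ of smaller length. A case analysis on the overlap between $\{j, j+1\}$ and the positions of any 321 pattern in $w'$, using $w(j) > w(j+1)$, shows that a 321 pattern in $w'$ always forces one in $w$; contrapositively, if $w$ avoids 321 then so does $w'$. By induction, $\mathbf{w}'$ contains no braid factor, so the braid in $\mathbf{w}$ must involve the final letter, forcing the last three letters to form a braid---and the core calculation then plants a 321 pattern at consecutive positions of $w$. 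For (3) $\Rightarrow$ (2), also argue the contrapositive: if $w$ has a 321 pattern at consecutive positions $i, i+1, i+2$, the core calculation gives $\ell(w \cdot s_i s_{i+1} s_i) = \ell(w) - 3$, so appending $s_i s_{i+1} s_i$ to any reduced word of $w \cdot s_i s_{i+1} s_i$ yields a reduced word of $w$ with a braid factor. Otherwise one locates a right descent $j$ of $w$ such that $ws_j$ still contains a 321 pattern, applies the inductive hypothesis to $ws_j$, and appends $s_j$.

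The main obstacle is this last step of (3) $\Rightarrow$ (2): showing that a non-consecutive 321 pattern in $w$ always admits a right descent $j$ for which $ws_j$ retains some 321 pattern. This follows from the same overlap case analysis, combined with the observation that a 321 pattern at positions $p < q < r$ forces descents in both intervals $[p, q-1]$ and $[q, r-1]$, and that the non-consecutivity provides enough ``slack'' to find a descent whose swap does not destroy every 321 in $w$.
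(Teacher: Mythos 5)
The paper does not actually prove this proposition --- it is quoted from Billey--Jockusch--Stanley \cite{billey-jockusch-stanley} --- so there is no in-paper argument to compare yours against; I can only assess your proof on its own terms, and it is correct. The equivalence (3) $\Leftrightarrow$ (4) via a single braid move, the multiset argument for (1) $\Leftrightarrow$ (3) via Matsumoto--Tits connectivity (commutation moves preserve the letter multiset, braid moves do not), and the core computation obtained by iterating the descent criterion $\ell(vs_k)=\ell(v)+1 \Leftrightarrow v(k)<v(k+1)$ three times are all sound. The one step you flag as the main obstacle does go through, and the precise observation that closes it is this: if $j$ is a right descent of $w$, then passing from $w$ to $ws_j$ destroys a given $321$ occurrence at positions $p<q<r$ only when $\{j,j+1\}=\{p,q\}$ or $\{j,j+1\}=\{q,r\}$; in every other overlap case the occurrence survives, possibly with one of its positions shifted by $1$. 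If the occurrence is not consecutive, then $q>p+1$ or $r>q+1$. In the first case, $w(p)>w(q)$ guarantees a descent $j\in[p,q-1]$, and such a $j$ can satisfy neither $\{j,j+1\}=\{p,q\}$ (which would force $q=p+1$) nor $\{j,j+1\}=\{q,r\}$ (which would force $j=q>q-1$); the second case is symmetric using the descent in $[q,r-1]$ guaranteed by $w(q)>w(r)$. Hence a ``safe'' descent always exists, the inductive step of (3) $\Rightarrow$ (2) is complete, and your argument as a whole is a valid, self-contained proof of the proposition.
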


This paper focuses on the subset of fully commutative permutations known as boolean permutations. 
While boolean permutations can be characterized using the language of the Bruhat order (see, for example, \cite[Chapter 2]{BB05}),
the following result provides a description analogous to that of Proposition~\ref{prop:tfae fully commutative}.   
\begin{proposition}[\cite{tenner-patt-bru}]\label{prop:tfae boolean}
Let $w$ be a permutation.
The following are equivalent:
\begin{itemize}
\item $w$ is boolean,
\item $w$ avoids the pattern 321 and 3412,
\item there is a reduced word of $w$ that consists of all distinct letters, and
\item every reduced word of $w$ consists of all distinct letters.
\end{itemize}
\end{proposition}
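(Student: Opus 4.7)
The plan is to prove the cycle $(1) \Rightarrow (2) \Rightarrow (4) \Rightarrow (3) \Rightarrow (1)$. The equivalence $(1) \Leftrightarrow (2)$ is the original characterization of boolean permutations via pattern avoidance, and I would cite it directly from \cite{tenner-patt-bru}. The implication $(4) \Rightarrow (3)$ is immediate since $R(w)$ is nonempty.

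For $(3) \Rightarrow (4)$, which will also hand us the $321$-avoidance half of $(2)$: suppose $[a_1 \cdots a_\ell] \in R(w)$ has all distinct letters. Commutation moves preserve the multiset of letters, while any braid move $s_i s_{i+1} s_i \leftrightarrow s_{i+1} s_i s_{i+1}$ requires the current word to contain a factor $i(i+1)i$ or $(i+1)i(i+1)$, both of which have a repeated letter. Starting from our distinct-letter word, every word in its commutation class still has distinct letters and hence admits no braid move. By Matsumoto's theorem---any two reduced words are connected by commutation and braid moves---this commutation class must exhaust $R(w)$. Thus every reduced word has distinct letters, establishing $(4)$, and simultaneously $w$ is fully commutative, which by Proposition~\ref{prop:tfae fully commutative} gives $321$-avoidance.

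The main obstacle is to close the cycle via $(2) \Rightarrow (4)$, equivalently, to show that a fully commutative $w$ avoiding $3412$ has reduced words with all distinct letters. I would approach this via the heap of $w$, a labeled poset whose linear extensions are precisely $R(w)$ and in which the multiplicity of the label $i$ equals the multiplicity of the letter $i$ in every reduced word. The key lemma is: a $321$-avoiding $w$ has two heap elements with the same label $i$ if and only if $w$ contains the pattern $3412$ whose doubled level sits at $i$. The forward direction is the harder one: two $i$-labeled heap elements must be separated in the heap by at least one $(i-1)$-labeled and at least one $(i+1)$-labeled element (otherwise the local factor of some reduced word would be $i(i+1)i$ or $(i+1)i(i+1)$, contradicting $321$-avoidance via Proposition~\ref{prop:tfae fully commutative}), yielding a ``diamond'' subposet whose labels $i, i+1, i-1, i$ decode directly as a $3412$ pattern in the one-line notation of $w$. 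The reverse direction is easier, since the heap of the pattern $3412$ itself has a doubled label and embeds (via the pattern inclusion) into the heap of any fully commutative permutation containing $3412$. Once this dictionary between heap-label multiplicities and $3412$ patterns is pinned down precisely, the cycle closes and the four conditions are equivalent.
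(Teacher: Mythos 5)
First, a point of reference: the paper does not prove this proposition at all --- it is quoted verbatim from \cite{tenner-patt-bru} --- so there is no in-paper argument to compare yours against, and your proposal must stand on its own. Its strongest piece is $(3)\Rightarrow(4)$: the observation that commutation moves preserve the multiset of letters while braid moves require a repeated-letter factor, combined with Tits/Matsumoto connectivity of $R(w)$, is correct and also cleanly yields full commutativity and hence $321$-avoidance via Proposition~\ref{prop:tfae fully commutative}. The structural weakness is that you cite $(1)\Leftrightarrow(2)$ from the very reference to which the whole proposition is attributed; since that is the only place the Bruhat-order definition of ``boolean'' enters, condition (1) is never actually engaged. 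A self-contained route would prove $(1)\Leftrightarrow(3)$ directly from the subword property of Bruhat order: a distinct-letter word is reduced (if $i\notin\supp(v)$ then $v$ lies in a parabolic preserving $\{1,\dots,i\}$, so $\ell(s_iv)=\ell(v)+1$), its $2^{\ell(w)}$ subwords have distinct supports and hence give distinct elements of the principal order ideal, and the subword property shows this exhausts the ideal and identifies it with a boolean lattice. That route would also let $3412$-avoidance be derived rather than imported.

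The heap argument for $(2)\Leftrightarrow(4)$ contains two genuine gaps. (i) Your justification that consecutive $i$-labelled heap elements must be separated by at least one $i-1$ \emph{and} at least one $i+1$ --- ``otherwise some reduced word has factor $i(i{+}1)i$ or $(i{+}1)i(i{+}1)$'' --- only covers the case of a single separating element. If, say, two elements labelled $i+1$ and none labelled $i-1$ lie between the two $i$'s, no braid move is available at level $i$; you must push the argument upward (consecutive $(i{+}1)$'s with no $i$ between them require two $(i{+}2)$'s between them, and so on) and use finiteness of the label set to reach a contradiction. This recursion is also what rules out the boundary cases $i=1$ and $i=n-1$, where your diamond has no room for an $i-1$ or an $i+1$ and the claimed $3412$ occurrence cannot be formed as stated. (ii) Both the ``decoding'' of the convex diamond with labels $i,i-1,i+1,i$ into a $3412$ pattern in the one-line notation of $w$, and the converse claim that a $3412$ occurrence in a fully commutative permutation forces a doubled heap label (pattern containment does not in general induce heap embeddings), are asserted rather than proved; each requires an explicit computation, for instance by tracking, for each $i$, the quantity $\#\{j\le i: w(j)>i\}$ and relating it to the multiplicity of the letter $i$. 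Until (i) and (ii) are filled in, the cycle does not close.
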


\subsection{Heaps of a boolean permutation}
\label{sec:heap of boolean permutation}
Heaps are posets used in~\cite{Ste96} to study fully commutative elements of Coxeter groups. In this paper, heaps for boolean permutations provide a useful visualization of a key construction in Section \ref{subsection:canonical word}. 
For a detailed list of attributions on the theory of heaps, see~\cite[solutions to Exercise 3.123(ab)]{Sta12}.

By Proposition \ref{prop:tfae boolean}, a boolean permutation $w$ has the property that each letter in $\supp(w)$ appears exactly once in every reduced word of $w$. 
Thus the relative positions of every pair of consecutive letters $i$ and $i+1$ 
in reduced words of $w$ are fixed. 
This allows 
one to give the following simple description of the heap of a boolean permutation.

\begin{definition}
\label{defn:heap for boolean permutation}
Given a boolean permutation $w \in S_n$, 
the heap 
$H_w$ of $w$ 
is the partial order 
on $\supp(w)$ obtained via the transitive closure of the cover relations 
\[
\text{
$i \prec i+1$ if $i$ appears to the left of $i+1$ in every reduced word of $w$}
\]
and 
\[
\text{
$i \succ i+1$ otherwise.}
\]
We will refer to the Hasse diagram of a heap as a \emph{heap diagram}.
\end{definition}

 The following proposition explains the connection between the heap of a boolean permutation and its complete set of reduced words.

\begin{proposition}
[{\cite[proof of Proposition~2.2]{Ste96} and~\cite[solutions to Exercise 3.123(ab)]{Sta12}}]
\label{prop:set of labeled linear extensions is commutativity class}
If $w$ is a boolean permutation, then 
the set of linear extensions of the heap $H_{w}$ is the set of reduced words of $w$.
\end{proposition}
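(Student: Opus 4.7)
The plan is to establish the two inclusions separately.

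First, I would verify that every reduced word of $w$ is a linear extension of $H_w$. Fix $\rw{s} \in R(w)$, which by Proposition~\ref{prop:tfae boolean} uses each letter of $\supp(w)$ exactly once. The cover relations of Definition~\ref{defn:heap for boolean permutation} are set up so that each cover $i \prec i+1$ (resp.\ $i \succ i+1$) is respected by $\rw{s}$ by design. Since the partial order on $H_w$ is the transitive closure of these covers, $\rw{s}$ respects the whole order, and hence is a linear extension.

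For the converse direction, I would use the well-known poset-theoretic fact that any two linear extensions of a finite poset are connected by a sequence of swaps of adjacent incomparable elements. Combined with the first direction, it suffices to show that any such swap corresponds to a Coxeter commutation relation that leaves $w$ unchanged. So suppose $a$ and $b$ are incomparable in $H_w$. Because the cover relations of $H_w$ are placed between every pair of consecutive integers in $\supp(w)$, incomparable elements must satisfy $|a-b| > 1$, so $s_a$ and $s_b$ commute. Therefore swapping $a$ and $b$ in adjacent positions of a word is a commutation move, which preserves both the associated permutation and the length. Starting from any fixed reduced word of $w$ (known to be a linear extension by the first direction) and performing these swaps, one reaches every other linear extension, and all of them must encode $w$ with length $\ell(w)$; hence they are reduced words of $w$.

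The main obstacle is justifying that incomparable elements of $H_w$ satisfy $|a-b| > 1$, which rests on the fact that every pair of consecutive integers in $\supp(w)$ does receive a cover relation. For this to make sense, one first needs the heap to be well defined: the relative order of $i$ and $i+1$ must be the same in every reduced word of $w$. This holds because $w$ is boolean and hence fully commutative (Proposition~\ref{prop:tfae fully commutative}), so all reduced words of $w$ are connected by commutation moves, and such moves swap only letters that differ by more than one and so never exchange the positions of $i$ and $i+1$. With this observation in hand, both directions fall into place.
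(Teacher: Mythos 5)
Your proof is correct: both inclusions are justified, and you correctly identify and discharge the two hidden prerequisites (well-definedness of the heap via full commutativity, and the fact that incomparable elements of $H_w$ differ by more than $1$, so that adjacent-transposition connectivity of linear extensions translates into commutation moves). The paper itself gives no proof of this proposition --- it only cites Stembridge and Stanley --- and your argument is essentially the standard one found in those sources, so nothing further is needed.
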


We conclude this subsection with an example illustrating the heap of a boolean permutation.

\begin{example}\label{ex:heap314569278}
The permutation $w=314569278\in S_9$ has $\rw{21873456}$ as a reduced word, and the permutation is therefore boolean. The heap diagram for $w$ is depicted in Figure~\ref{fig:fence_reduced_word21873456}. By Proposition \ref{prop:set of labeled linear extensions is commutativity class}, the elements of $R(w)$ are exactly the linear extensions of this heap. For instance, $\rw{87213456}\in R(w)$.

\begin{figure}[htb!]
\centering
\begin{tikzpicture}[xscale=1.5,yscale=1.5,>=latex]
\def\posetedgecolor{blue}
\def\posetedgenegativeslope{red}
\node(1) at (1,1) {$1$}; 
\node(2) at (2,0) {$2$}; 
\node(3) at (3,1) {$3$}; 
\node(4) at (4,2) {$4$};  
\node(5) at (5,3) {$5$};  
\node(6) at (6,4) {$6$}; 
\node(7) at (7,3) {$7$}; 
\node(8) at (8,2) {$8$};

\draw[-] (1) -- (2); 
\draw[-] (2) -- (3); 
\draw[-] (3) -- (4); 
\draw[-] (4) -- (5); 
\draw[-] (5) -- (6); 
\draw[-] (6) -- (7);
\draw[-] (7) -- (8);

\end{tikzpicture}
\caption{The heap diagram for the boolean permutation 
$314569278$.}\label{fig:fence_reduced_word21873456}
\end{figure}
\end{example}

 \subsection{Robinson--Schensted--Knuth tableaux} 
 Given a partition $\lambda=(\lambda_1, \lambda_2, \ldots) \vdash n$, the \emph{Young diagram of shape $\lambda$} is a top- and left-justified collection of $n$ boxes such that the $i^{th}$ row has $\lambda_i$ boxes. A \emph{standard Young tableau of shape $\lambda$} is a filling of the Young diagram of shape $\lambda$ by the values $1,\ldots, n$ such that each value appears exactly once and values increase from left to right in rows and from top to bottom in columns. 
 
The  
Robinson--Schensted--Knuth (RSK) correspondence
as described in~\cite{Sch61}
is 
a bijection 
\[{w \mapsto (\P(w),\Q(w))}\]
from $S_n$ onto pairs of standard Young tableaux of size $n$ having identical shape. The tableau $\P(w)$ is  called the \emph{insertion tableau} of $w$, and the tableau $\Q(w)$ is the \emph{recording tableau} of $w$. The shape of these tableaux is called the \emph{RSK partition of $w$}, denoted $\sh(w) = (\lambda_1(w), \lambda_2(w),\ldots)$. We will also write $\P_i(w)$ to denote the partial insertion tableau constructed by the first $i$ letters, $w(1)\cdots w(i)$, in the one-line notation for $w$. For more details, including a precise description of the RSK insertion algorithm, see, for example~\cite[Section~7.11]{sta99}.  

The following symmetry result is a feature of the RSK insertion algorithm, and one that will simplify our own work.

\begin{proposition}[\cite{Scu63}]\label{prop:P w inverse is Q w}
For any permutation $w$, 
$$\P(w^{-1})=\Q(w).$$
\end{proposition}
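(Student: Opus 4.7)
The plan is to deduce this identity from the fundamental symmetry of RSK at the level of biwords, applied to the standard two-line representation of a permutation.

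First, I would identify $w \in S_n$ with the biword $\binom{1\ 2\ \cdots\ n}{w(1)\ w(2)\ \cdots\ w(n)}$, so that RSK inserts the bottom-row entries in order and records each newly created box with the corresponding top-row index, producing $\P(w)$ and $\Q(w)$. I would then observe that interchanging the two rows of this biword and re-sorting the columns so that the new top row is weakly increasing yields exactly the biword $\binom{1\ 2\ \cdots\ n}{w^{-1}(1)\ \cdots\ w^{-1}(n)}$ associated with $w^{-1}$; no ties arise in the sort because the bottom row of the biword for $w$ is itself a permutation.

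The key tool is then the \emph{symmetry theorem for RSK on biwords}: swapping the two rows of a biword interchanges its insertion and recording tableaux. Applying this to the biword for $w$ immediately gives that $w^{-1} \mapsto (\Q(w), \P(w))$ under RSK, and reading off the first component yields $\P(w^{-1}) = \Q(w)$.

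The main obstacle is proving the biword symmetry theorem itself. I would handle this using Fomin's growth-diagram reformulation of RSK: fill an $n \times n$ grid with the entries of the permutation matrix of $w$, and label each of the $(n+1)^2$ lattice vertices by a partition, starting from the empty partition along the bottom and left boundaries and propagating upward and to the right by Fomin's local growth rules. The chain of partitions along the top edge of the grid encodes $\P(w)$, while the chain along the right edge encodes $\Q(w)$. Because the local growth rules are manifestly symmetric under reflection across the main diagonal, and because that reflection sends the permutation matrix of $w$ to the permutation matrix of $w^{-1}$, the reflection interchanges the top and right boundary chains, yielding $\P(w^{-1}) = \Q(w)$. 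The essential technical work is a short case analysis verifying the diagonal symmetry of the local rules; once that is in hand, the result for permutations follows without any further appeal to the insertion algorithm.
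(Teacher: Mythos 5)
Your argument is correct, but note that the paper does not prove this statement at all: it is quoted as a known result of Sch\"utzenberger, with a citation to the 1963 paper, and the symmetry $\P(w^{-1})=\Q(w)$ is then used as a black box (e.g.\ in the proof of Theorem~\ref{thm:boolean second row} and Corollary~\ref{cor:second row rule for recording tableaux}). What you supply is a self-contained proof via Fomin's growth diagrams, which is one of the standard modern routes to this theorem; the diagonal symmetry of the local growth rules, together with the fact that transposing the permutation matrix of $w$ yields that of $w^{-1}$, does indeed interchange the two boundary chains and hence the insertion and recording tableaux. Two small remarks. First, your intermediate reduction to the biword symmetry theorem is dispensable here: the growth-diagram argument applies directly to the permutation matrix of $w$, so you never need the general biword statement (which for non-permutation biwords requires a little more care, e.g.\ the standardization of repeated entries). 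Second, whether the top edge encodes $\P(w)$ and the right edge $\Q(w)$ or vice versa depends on the orientation conventions for the grid; this does not affect the argument, but you should fix a convention and check it once on a small example before asserting which edge is which. With those caveats, your proof is complete and correct, and it is more informative than the paper's citation, at the cost of importing the growth-diagram machinery and its case analysis.
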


Schensted's theorem~\cite[Theorem~1]{Sch61}, stated below, articulates an important relationship between the RSK partition shape and the one-line notation for $w$.

\begin{theorem}\label{thm:Schensted theorem}
Let $w$ be a permutation with RSK partition shape $\sh(w)=(\lambda_1(w), \lambda_2(w), \ldots)$, and let $(\mu_1(w), \mu_2(w),\ldots)$ denote the conjugate of $\sh(w)$. The length of a longest increasing (resp., decreasing) subsequence in the one-line notation of $w$ is $\lambda_1(w)$ (resp., $\mu_1(w)$).
\end{theorem}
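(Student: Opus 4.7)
The plan is to handle the increasing case directly by tracking the first row of the partial insertion tableaux $\P_i(w)$, and then derive the decreasing case from the increasing one via an RSK symmetry.

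For the increasing half, I would prove by induction on $i$ the following stronger statement: the entries in the first row of $\P_i(w)$ are precisely the minimum last values of increasing subsequences of $w(1)w(2)\cdots w(i)$ of each length, i.e. if the first row of $\P_i(w)$ is $a_1 < a_2 < \cdots < a_k$, then $a_j$ is the smallest value $v$ for which there exists an increasing subsequence of length $j$ in $w(1)\cdots w(i)$ ending at $v$. The base case $i=1$ is immediate. For the inductive step, I would analyze what happens when $w(i+1)$ is inserted into $\P_i(w)$: either $w(i+1)$ is larger than every entry of the first row of $\P_i(w)$ and is appended (producing a new length-$(k+1)$ increasing subsequence by appending $w(i+1)$ to the length-$k$ subsequence ending at $a_k$), or $w(i+1)$ bumps the smallest $a_j$ with $a_j > w(i+1)$, in which case the new $j$-th entry $w(i+1)$ is still the minimum last value of a length-$j$ increasing subsequence (the length-$(j-1)$ sequence ending at $a_{j-1}$ predates $w(i+1)$ in one-line notation, so we can append $w(i+1)$, and minimality follows from the minimality of $a_j$ before insertion). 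Taking $i=n$ gives $\lambda_1(w)$ equals the length of the longest increasing subsequence.

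For the decreasing half, the cleanest route is to invoke a symmetry of RSK: if $w^{\mathrm{rev}}$ denotes the permutation with one-line notation $w(n)w(n-1)\cdots w(1)$, then $\sh(w^{\mathrm{rev}}) = \sh(w)'$, the conjugate shape. Granting this, longest increasing subsequences of $w^{\mathrm{rev}}$ are in obvious bijection with longest decreasing subsequences of $w$ (reading positions in reverse), so applying the increasing statement to $w^{\mathrm{rev}}$ yields $\mu_1(w) = \lambda_1(w^{\mathrm{rev}})$ equals the length of the longest decreasing subsequence of $w$.

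The main obstacle is the inductive bookkeeping in the first step: one must verify both that each first-row entry is achieved as the last value of an increasing subsequence of the correct length, and that it is the minimum such value. The minimality is what keeps bumping consistent with the inductive hypothesis and is the subtle part. The symmetry fact $\sh(w^{\mathrm{rev}}) = \sh(w)'$ is itself nontrivial, but is a classical identity about RSK that I would cite rather than reprove, so it is not the conceptual obstacle here. (Alternatively, one could prove the decreasing statement directly by an analogous induction tracking the first column instead of the first row, using the fact that at each insertion step a new box is added to some row, contributing to a column whose length tracks the longest decreasing subsequence.)
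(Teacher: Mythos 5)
The paper does not prove this statement at all: it is quoted verbatim as Schensted's theorem and attributed to \cite[Theorem~1]{Sch61}, so there is no internal proof to compare against. Your argument is the standard classical proof of that result (the ``minimal last entries of increasing subsequences of each length'' invariant for the first row, as in Schensted's original paper and in \cite[Chapter~3]{Sagan}), and the sketch is correct, including the identification of the minimality bookkeeping in the bumping case as the delicate step. The only caveat worth noting is that the symmetry $\sh(w^{\mathrm{rev}})=\sh(w)'$ that you cite for the decreasing half is itself essentially of the same depth as the statement being proved (it is usually derived from the row/column insertion comparison), so a referee might prefer your parenthetical alternative of running the analogous induction on the first column directly; either route is legitimate.
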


By Proposition \ref{prop:tfae fully commutative}, fully commutative permutations are 321-avoiding and therefore have decreasing subsequences of length at most two. It follows from Theorem \ref{thm:Schensted theorem}, then, that the  RSK partitions for fully commutative --- and, in particular, for boolean --- permutations have at most two rows. That observation is key to the arguments in this paper.

Theorem \ref{thm:Schensted theorem} is sufficient for the purposes of this paper,  
but we point out that Greene's theorem~\cite[Theorem ~3.1]{Gre74} is an important generalization of that result, and could perhaps be useful in extensions of our work. For more details about Greene's theorem, see, for example,~\cite[Chapter 3]{Sagan}.

\subsection{Runs and longest increasing subsequences}\label{subsection.run.longest.increasing.subsequences}

In this paper, we define a \emph{run} as an increasing or decreasing sequence of consecutive integers; for example, $234$ and $432$ are runs, but $245$ and $542$ are not. 
Given a permutation $w$, 
let $\run(w)$ denote the fewest number of runs needed to form a reduced word for $w$. 
A reduced word $\rw{s} \in  R(w)$
that can be written as the concatenation of $\run(w)$ runs is called an \emph{optimal run word for $w$}. 

\begin{example}\label{ex:optimal run words}
Consider the permutation $w=345619278 \in S_9$. Examining all reduced words for $w$ shows that $\run(w) = 3$, and thus the reduced words $\rw{21873456}$ and $\rw{87213456}$ given in Example~\ref{ex:heap314569278}  
are both optimal run words for $w$. We can highlight their runs by writing them as $\rw{21\cdot87\cdot3456}$ and $\rw{87\cdot21\cdot3456}$, respectively. In contrast, $\rw{82713456} \in R(w)$ is not optimal because the string $82713456$ cannot be written as the concatenation of three runs.
\end{example}

Recently, Mazorchuk and Tenner~\cite[Theorem 6.4]{MT22} showed that, if $w$ is a boolean permutation, then $\lambda_2(w) = \run(w)$. 
Because the RSK partitions of boolean permutations have at most two rows, we can apply Theorem \ref{thm:Schensted theorem} to conclude that the length of a longest increasing subsequence of a boolean permutation $w \in S_n$  is equal to $n - \run(w)$. In the present work, we will see that this result is not only true for boolean permutations, but for all permutations.

\section{Runs and RSK partitions}\label{sec:runs.and.rs.partitions}

The goal of this section is to prove Theorem~\ref{thm:run and lambda1}, which relates the first row of the RSK partition to the $\run$ statistic, for all permutations. This will generalize \cite[Theorem 6.4]{MT22}, which was a result for boolean permutations, and the argument from that paper will guide this more general setting.

\begin{proposition}[{\cite[Lemma 6.2 and Corollary 6.3]{MT22}}]\label{prop:lower bound on runs}
For any permutation $w \in S_n$, we have $n - \lambda_1(w) \le \run(w)$.
\end{proposition}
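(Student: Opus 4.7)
The plan is induction on $r = \run(w)$. The base case $r = 0$ forces $w$ to be the identity, where $\lambda_1(w) = n$ and the bound holds with equality. For the inductive step, fix an optimal run word $\rw{i_1 \cdots i_\ell}$ for $w$ and decompose it as a prefix consisting of the first $r-1$ runs followed by the final run $v$. Let $u$ be the permutation given by this prefix; since the prefix is itself a reduced word assembled from $r-1$ runs, $\run(u) \leq r-1$, so the inductive hypothesis yields $\lambda_1(u) \geq n - (r-1)$.

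The heart of the argument is understanding the effect on the one-line notation of right-multiplying $u$ by a single run $v$. Unpacking the convention that $s_i$ swaps positions $i$ and $i+1$, a direct computation shows that if $v = \rw{a(a+1)\cdots b}$, then the one-line notation of $w = uv$ is obtained from that of $u$ by cyclically left-shifting the entries in positions $a, a+1, \ldots, b+1$; equivalently, the value $u(a)$ migrates to position $b+1$, while every other entry, even if shifted by one position, preserves its relative order with every entry except $u(a)$. The decreasing case $v = \rw{b(b-1)\cdots a}$ is symmetric: the value $u(b+1)$ migrates to position $a$, and again the relative order of all other entries is preserved. The edge case of a singleton run is a single adjacent transposition, handled in the standard way.

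From this shift description it follows that any increasing subsequence of $u$ that avoids the single migrated entry is automatically an increasing subsequence of $w$; and an increasing subsequence of $u$ that does use this entry can be shortened by one to produce an increasing subsequence of $w$. By Theorem~\ref{thm:Schensted theorem}, this gives $\lambda_1(w) \geq \lambda_1(u) - 1 \geq n - r$, so $n - \lambda_1(w) \leq r = \run(w)$, closing the induction.

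No individual step is conceptually hard. The only place demanding care is verifying the cyclic-shift description of $u \mapsto uv$ for both the increasing and decreasing flavors of $v$, while also checking that the singleton case (which is simultaneously increasing and decreasing) is consistent; this is a bookkeeping unrolling of the swaps rather than a structural insight, so I do not expect any real obstacle.
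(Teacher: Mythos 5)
Your proof is correct, and it is essentially the argument the paper relies on: the paper's ``proof'' is just a citation to \cite[Lemma 6.2 and Corollary 6.3]{MT22}, and the underlying mechanism there --- and in the paper's own later lemma on decompositions into runs --- is exactly your observation that right-multiplication by a single run performs an Ulam move (delete one entry, reinsert it elsewhere), which can decrease the length of a longest increasing subsequence by at most one, so $\lambda_1(w) \ge n - \run(w)$ by induction on the number of runs. No gaps; the cyclic-shift bookkeeping and the induction are both sound.
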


\begin{proof}
Although \cite{MT22} states these results in terms of boolean permutations, the same proofs work for all permutations.
\end{proof}

It remains to show the other direction of the inequality, which we will do in Proposition~\ref{prop:upper bound on runs}. To that end, we will define a function $\rho$, which maps a permutation $w$ to a shorter permutation (``shorter'' in terms of Coxeter length). It multiplies $w$ by a single run \textbf{on the left or right}, in such a way that a longest increasing subsequence in $\rho(w)$ is longer than that of $w$.

\begin{definition}\label{defn:the map rho}
We define a map 
$$\rho : \left(S_n \setminus \{12\cdots n\}\right) \rightarrow S_n$$
as follows. Fix a permutation $w \in S_n$ that is not the identity permutation, and consider the lexicographically least longest increasing subsequence in the one-line notation for $w$. (In fact, any longest increasing subsequence would satisfy our needs.) 
Let $q \in [1,n]$ be the smallest value not appearing in this subsequence. 
\begin{itemize}
\item If $q = 1$, then set $t := w^{-1}(1)$. Note that, by definition of $q$, we must have $t > 1$. Define $r$ to be the run $(t-1) \cdots 321$ and set $\rho(w) := w\rw{r}$. In other words, $\rho(w)$ uses the run to slide $1$ into the leftmost position of the permutation.
\item Suppose that $q>1$. If $q$ appears to the right of $q-1$ in the one-line notation for $w$, then set $t:= w^{-1}(q)$ and $t' := w^{-1}(q-1)$, so $t > t'$. 
In addition, $t > t'+1$ because otherwise adding $q$ to our lexicographically least longest increasing subsequence (which includes $q-1$, by definition of $q$) would have made a longer increasing subsequence. Define $r$ to be the (decreasing) run $(t-1) \cdots (t'+1)$ and set $\rho(w) := w\rw{r}$. That is, $\rho(w)$ uses the run to slide $q$ into position $t'+1$, the position immediately to the right of $q-1$.
\item If $q>1$ and $q$ appears to the left of $q-1$ in the one-line notation for $w$, then let $j \in [1,q-1]$ be the smallest value appearing to the right of $q$ in the one-line notation of $w$. 
Define $r$ to be the (increasing) run $j(j+1)\cdots(q-1)$ and set $\rho(w) := \rw{r}w$. In other words, there is a (possibly nonconsecutive) subsequence 
\newcolumntype{C}[1]{>{\centering\arraybackslash}p{#1}}
$$\begin{tabular}{C{.4in}C{.4in}C{.4in}C{.4in}C{.4in}C{.4in}C{.4in}}
$q$ & $j$ & $(j+1)$ & $(j+2)$ & $\cdots$ & $(q-2)$ & $(q-1)$
\end{tabular}$$
in $w$, and $\rho(w)$ transforms that subsequence into
$$\begin{tabular}{C{.4in}C{.4in}C{.4in}C{.4in}C{.4in}C{.4in}C{.4in}}
$j$ & $(j+1)$ & $(j+2)$ & $\cdots$ & $(q-2)$ & $(q-1)$ & \phantom{.}$q$.
\end{tabular}$$
\end{itemize}
\end{definition}

To make the process of Definition~\ref{defn:the map rho} more concrete, we consider a few examples. We will continue example (c) after a trio of lemmas, demonstrating those results as well.

\begin{example}\label{ex:rho examples}
\begin{enumerate}[(a)]
\item Let $u=342516$. The lexicographically least longest increasing subsequence in $u$ is $3456$. 
The smallest value not appearing in $3456$ is $q=1$, so we use the first scenario of Definition~\ref{defn:the map rho} to compute $\rho(u)$.
Set $t:=u^{-1}(q)=5$, producing the decreasing run $r=4321$. 
We set $\rho(u)=u\rw{r}$.
Multiplying $u$ by $\rw{r}$ on the right is equivalent to sliding $u(5)=1$ into the leftmost position of the one-line notation, so $\rho(u)=\textcolor{red}{1}34256$.

\item Let $v=142563$. The lexicographically least longest increasing subsequence in $v$ is $1256$. The smallest value not appearing in $1256$ is $q=3$. 
Since $3$ appears to the right of $3-1$, we use the second scenario of Definition~\ref{defn:the map rho} to compute $\rho(v)$. We set $t:=v^{-1}(q)=v^{-1}(3)=6$ and $t':=v^{-1}(q-1)=v^{-1}(2)=3$, producing the decreasing run $r=54$. 
We set $\rho(v)=v\rw{r}$. Multiplying $v=142563$ by $\rw{r}$ on the right will slide $v(t)=v(6)=q=3$ into position $t'+1=4$, the position immediately to the right of $q-1=2$, so $\rho(v)=142\textcolor{red}{3}56$.

\item Consider the permutation $w=51642738$. The lexicographically least longest increasing subsequence in $w$ is $1238$. The smallest value not appearing in this longest subsequence is $q=4$. 
Since $4$ appears to the left of $4-1$ in $w$, we use the third scenario of Definition~\ref{defn:the map rho} to compute $\rho(w)$, producing $j=2$ and the increasing run $r=23$. 
Then multiplying $w$ by $\rw{r}$ on the left transforms the subsequence 
$423$ of $w$ into $234$:
$\rho(w)=\rw{r}w=
516237\textcolor{red}{4}8$. 
\end{enumerate}
\end{example}

There are important features of the image of a permutation under the map $\rho$, and we will want to take advantage of these later. To prepare for that, we now identify three of the map's key qualities.

\begin{lemma}\label{lem:rho shortens a permutation}
For any nonidentity permutation $w$,
$$\ell(\rho(w)) + \ell(\rw{r})= \ell(w),$$
where $r$ is the run described in Definition~\ref{defn:the map rho}.
In particular, $\ell(\rho(w)) < \ell(w)$.
\end{lemma}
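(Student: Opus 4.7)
The plan is to verify, in each of the three cases of Definition~\ref{defn:the map rho}, that multiplying $w$ by $\rw{r}$ on the appropriate side is a \emph{reduced product}; that is, each simple reflection of $\rw{r}$, applied in order, drops the Coxeter length by exactly one. Summing these unit drops gives $\ell(\rho(w)) = \ell(w) - \ell(\rw{r})$, and the strict inequality $\ell(\rho(w)) < \ell(w)$ is automatic since $r$ is nonempty whenever $w$ is not the identity. Throughout, I invoke the standard criteria: $\ell(ws_k) = \ell(w) - 1$ iff $w(k) > w(k+1)$, while $\ell(s_k w) = \ell(w) - 1$ iff $w^{-1}(k) > w^{-1}(k+1)$.

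For Cases 1 and 2 (right multiplication by a decreasing run), I first establish that every position $p$ with $t' < p < t$ satisfies $w(p) > q$, where I set $q = 1$ and $t' = 0$ in Case 1. This holds because the values $1, 2, \ldots, q-1$ of the lexicographically least longest increasing subsequence occupy strictly increasing positions, with the largest of these being $w^{-1}(q-1) = t'$, so every position strictly greater than $t'$ carries a value at least $q$; the unique location of $q$ itself is $t$. Iterating, right-multiplication by $s_{t-1}$ swaps $q$ (at position $t$) past the strictly larger value at position $t-1$, which reduces length by one; each successive $s_{t-1-i}$ then slides $q$ one step further to the left past another larger value, again reducing length by one.

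For Case 3 (left multiplication by the increasing run $s_j s_{j+1} \cdots s_{q-1}$), the central positional claim I will prove is that, for every $k$ with $j \le k \le q-1$,
\[
w^{-1}(k) > w^{-1}(q).
\]
To see this, observe that $1, 2, \ldots, q-1$ all belong to the lex-least longest increasing subsequence (by minimality of $q$), so their positions form a strictly increasing chain
\[
w^{-1}(1) < w^{-1}(2) < \cdots < w^{-1}(q-1).
\]
The definition of $j$ gives $w^{-1}(j) > w^{-1}(q)$, and the monotonicity just displayed forces $w^{-1}(k) > w^{-1}(q)$ for all $k \in [j, q-1]$. With this in hand, I track values after left multiplications: applying $s_k s_{k+1} \cdots s_{q-1}$ on the left of $w$ sends the value $q$ at position $w^{-1}(q)$ down to $k$, relabels each value $i \in [k, q-1]$ at position $w^{-1}(i)$ to $i+1$, and fixes every other value. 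Consequently, just before multiplying by the next $s_k$, the value $k+1$ occupies position $w^{-1}(q)$ while the value $k$ remains at position $w^{-1}(k)$; the claim gives $w^{-1}(q) < w^{-1}(k)$, so applying $s_k$ on the left drops the length by one.

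The main obstacle is Case 3: one must both isolate the precise positional inequality above and track how each left multiplication simultaneously shifts several values. Cases 1 and 2 become essentially mechanical once one observes that the relevant block of positions is populated by values strictly exceeding $q$.
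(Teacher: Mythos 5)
Your proposal is correct and takes essentially the same approach as the paper: the paper's proof is a one-line observation that the minimality of $q$ and the construction of $r$ ensure each simple reflection in $r$ undoes an inversion, and your argument is simply a careful case-by-case verification of that fact (including the key positional inequalities $w(p)>q$ for $t'<p<t$ in the right-multiplication cases and $w^{-1}(k)>w^{-1}(q)$ for $k\in[j,q-1]$ in the left-multiplication case).
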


\begin{proof}
The minimality of $q$ and the definition(s) of the run $r$ in Definition~\ref{defn:the map rho} ensure that each simple reflection described by the letters in $r$ undoes an inversion of the permutation.
\end{proof}

While an application of the map $\rho$ decreases the length statistic, it increases a different permutation statistic.

\begin{lemma}\label{lem:rho has a longer subsequence}
For any nonidentity permutation $w$, the length of a longest increasing subsequence in $\rho(w)$ is greater than the length of a longest increasing subsequence in $w$.
\end{lemma}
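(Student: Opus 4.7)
My plan is to split into the three scenarios of Definition~\ref{defn:the map rho}, and in each case exhibit an increasing subsequence of $\rho(w)$ whose length strictly exceeds the length $k$ of the lexicographically least longest increasing subsequence $L = a_1 < a_2 < \cdots < a_k$ of $w$ used in defining $\rho$. The essential input throughout is that, by minimality of $q$, every value in $\{1, 2, \ldots, q-1\}$ appears in $L$; since $L$ is increasing, these must occur as the initial segment $a_i = i$ for $1 \le i \le q-1$.

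The first two scenarios, involving right multiplication by a run, rearrange only positions. In scenario~1 ($q=1$), the slide moves the value $1$ into position~$1$ and shifts the entries at positions $1, \ldots, t-1$ one step right while preserving their relative order, so prepending $1$ to $L$ (whose entries all exceed $1$) gives an increasing subsequence of length $k+1$. In scenario~2, the slide moves $q$ from position $t$ into position $t'+1$, directly right of $q-1$ at position $t' = w^{-1}(q-1)$, and shifts the entries in positions $t'+1, \ldots, t-1$ one step right. Any remaining entries $a_q, \ldots, a_k$ satisfy $a_i > q$ (they exceed $q-1$ and cannot equal $q$), and their positions remain past $t'+1$, so $1, 2, \ldots, q, a_q, \ldots, a_k$ witnesses the desired length $k+1$.

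Scenario~3 is the main obstacle, because left multiplication by $\sigma := s_j s_{j+1} \cdots s_{q-1}$ relabels values: a short computation shows that $\sigma$ sends $q \mapsto j$ and $i \mapsto i+1$ for $j \le i \le q-1$, fixing all other values. By the definition of $j$, the values $1, \ldots, j-1$ lie strictly left of $w^{-1}(q)$ and $j$ lies strictly right of it; combining this with the chain $w^{-1}(1) < w^{-1}(2) < \cdots < w^{-1}(q-1)$ forced by $L$, I obtain
\[
w^{-1}(1) < \cdots < w^{-1}(j-1) < w^{-1}(q) < w^{-1}(j) < \cdots < w^{-1}(q-1).
\]
Reading $\rho(w) = \sigma w$ at these positions in this order produces the values $1, 2, \ldots, q$. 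The remaining entries $a_q, \ldots, a_k$ exceed $q$, so $\sigma$ fixes them, and their positions in $\rho(w)$ still lie past $w^{-1}(q-1)$; this extends the construction to length $k+1$. The only really delicate step in the entire argument is verifying these position inequalities in scenario~3.
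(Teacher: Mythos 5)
Your proposal is correct and follows the same idea as the paper's (much terser) proof, which simply asserts that the run $r$ was constructed so that multiplying by $\rw{r}$ inserts $q$ into the chosen longest increasing subsequence without disturbing the relative order of its other entries. You have merely supplied the case-by-case verification of that assertion, including the value-relabeling bookkeeping in the left-multiplication scenario, which the paper leaves implicit.
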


\begin{proof}
The run $r$ in Definition~\ref{defn:the map rho} was constructed so that the effect of multiplying $w$ by $\rw{r}$ is to insert $q$ into the lexicographically least longest increasing subsequence in the permutation, without changing the relative order of any other letters in that subsequence.
\end{proof}

After establishing Theorem~\ref{thm:run and lambda1}, we shall see $\rho$ increases the length of a longest increasing subsequence by $1$.

Recall that Schensted's theorem relates those longest increasing subsequences to the size of the top row in the permutation's shape under the Robinson--Schensted--Knuth correspondence. 
This means that Lemma~\ref{lem:rho has a longer subsequence} can be written as
$$\lambda_1(\rho(w)) > \lambda_1(w).$$

Finally, because $\rho(w)$ and $w$ differ by product with a run, we can say something about the relationship between $\run(w)$ and $\run(\rho(w))$.

\begin{lemma}\label{lem:rho and runs}
For any nonidentity permutation $w$,
$$\run(w) \le \run(\rho(w)) + 1.$$
\end{lemma}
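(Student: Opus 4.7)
The plan is to construct a reduced word for $w$ with at most $\run(\rho(w))+1$ runs, by taking an optimal run word for $\rho(w)$ and appending (or prepending) a single run. Concretely, fix an optimal run word $\rw{s} \in R(\rho(w))$ realizing $\run(\rho(w))$ runs, and exhibit a reduced word for $w$ that consists of $\rw{s}$ together with one extra run.

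First I would recall, from Definition~\ref{defn:the map rho}, that in every case $\rho(w) = w \rw{r}$ or $\rho(w) = \rw{r} w$, where $r$ is a single run (either an increasing or a decreasing sequence of consecutive integers). Inverting the equation gives $w = \rho(w) \rw{r'}$ or $w = \rw{r'} \rho(w)$ respectively, where $r'$ is the reversal of $r$. The key observation is that reversing an increasing run yields a decreasing run, and vice versa, so $r'$ is again a single run. By Lemma~\ref{lem:rho shortens a permutation}, $\ell(\rho(w)) + \ell(\rw{r}) = \ell(w)$, and since $\ell(\rw{r'}) = \ell(\rw{r})$, this equality also holds with $r'$ in place of $r$. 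Therefore the concatenation $\rw{s}\cdot r'$ (or $r'\cdot \rw{s}$) is a reduced word for $w$, because its length equals $\ell(w)$ and it represents the correct product.

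Finally I would count runs: the word $\rw{s}$ is a concatenation of $\run(\rho(w))$ runs, and appending or prepending the single run $r'$ produces a reduced word for $w$ written as a concatenation of at most $\run(\rho(w)) + 1$ runs. By the definition of $\run(w)$ as the minimum such count over all reduced words, this gives $\run(w) \le \run(\rho(w)) + 1$, as desired.

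There is no real obstacle here; the argument is essentially bookkeeping. The only point that warrants explicit mention is that the reversal of a run is still a run, so the same bound holds regardless of whether $\rho$ multiplies on the left or on the right in Definition~\ref{defn:the map rho}.
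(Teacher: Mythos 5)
Your proof is correct and follows essentially the same route as the paper: take an optimal run word for $\rho(w)$ and append or prepend the reversal of the run $r$, which is again a run, yielding a reduced word for $w$ with at most $\run(\rho(w))+1$ runs. Your explicit check via Lemma~\ref{lem:rho shortens a permutation} that the concatenation has length $\ell(w)$ and is therefore reduced is a welcome detail that the paper's proof leaves implicit.
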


\begin{proof}
Since $\rho(w)$ is equal to $w\rw{r}$ or $\rw{r}w$ by Definition~\ref{defn:the map rho}, 
we can write $w$ as the product of an optimal run word for $\rho(w)$ and the inverse permutation $(\rw{r})^{-1}$. 
If we write the run $r$ as $r = r_1 \cdots r_h$, then $r_h \cdots r_1$ is also a run and it is a (in fact, the) reduced word for this $(\rw{r})^{-1}$. 
Thus it is possible to write $w$ as a product of $\run(\rho(w)) + 1$ runs.
This guarantees that an optimal run word for $w$ has at most $\run(\rho(w)) + 1$ runs, and so
$\run(w) \le \run(\rho(w)) + 1$.
\end{proof}

We demonstrate the preceding three lemmas using the third permutation from Example~\ref{ex:rho examples}.

\begin{example}\label{ex:lemmas about rho}
Consider the permutation $w=51642738$. As computed above,
$\rho(w)= 51623748$. 
\begin{enumerate}[(a)]
\item The $21$-patterns $42$ and $43$ appear in $w$ but not in $\rho(w)$. 
All other inversions of $w$ are also inversions of $\rho(w)$, so $\ell(\rho(w))=\ell(w)-\ell(\rw{r})=10-2=8$, illustrating Lemma~\ref{lem:rho shortens a permutation}.

\item In the construction of $\rho(w)$, the value $q=4$ was inserted into the increasing subsequence $1238$ to form $123\textcolor{red}{4}8$. This is an increasing subsequence of $\rho(w)$ that is longer than any increasing subsequence found in $w$, illustrating Lemma~\ref{lem:rho has a longer subsequence}.

\item We can compute $\run(w) = 4$. For example, three of the optimal run words for $w$ are:
\begin{center}
$\rw{32 \cdot 456 \cdot 321 \cdot 43}$\ , \
$\rw{32 \cdot 4321 \cdot 56 \cdot 43}$\ , \ and \ 
$\rw{32 \cdot 4321 \cdot 543 \cdot 6}$.
\end{center}
We highlight these particular optimal run words because their leftmost letters are ``$32$,'' which will be canceled after multiplication by $\rw{r}$. That is, we find three corresponding reduced words of $\rho(w)$:
\begin{center}
$\rw{456 \cdot 321 \cdot 43}$\ , \ 
$\rw{4321 \cdot 56 \cdot 43}$\ , \ and \ 
$\rw{4321 \cdot 543 \cdot 6}$.
\end{center} 
Therefore, $\run(\rho(w)) \le 3 = \run(w) - 1$, illustrating Lemma~\ref{lem:rho and runs}.
\end{enumerate}
\end{example}

Lemmas~\ref{lem:rho shortens a permutation}, \ref{lem:rho has a longer subsequence}, and~\ref{lem:rho and runs} give us tools for making inductive arguments involving the permutation statistics length, length of a longest increasing subsequence, and runs. In other words, these lemmas allow us to make inductive arguments toward achieving an upper bound for the function $\run(w)$. Such a bound, in turn, could be combined with the lower bound proved in Proposition~\ref{prop:lower bound on runs}.

\begin{proposition}\label{prop:upper bound on runs}
For any permutation $w \in S_n$, we have $\run(w) \le n - \lambda_1(w)$.
\end{proposition}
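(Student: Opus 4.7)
The plan is to prove the bound by induction on the Coxeter length $\ell(w)$, using the three lemmas about $\rho$ as the engine driving the induction.

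For the base case, take $w$ to be the identity permutation $12\cdots n$. Then $\ell(w) = 0$, so $w$ has the empty reduced word, giving $\run(w) = 0$. The one-line notation is itself an increasing sequence of length $n$, so $\lambda_1(w) = n$ by Theorem~\ref{thm:Schensted theorem}. Thus $\run(w) = 0 \le n - \lambda_1(w) = 0$, and the inequality holds (with equality).

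For the inductive step, fix a nonidentity $w \in S_n$ and assume the inequality holds for all permutations of strictly smaller Coxeter length. By Lemma~\ref{lem:rho shortens a permutation}, $\ell(\rho(w)) < \ell(w)$, so the inductive hypothesis applies to $\rho(w)$, giving
$$\run(\rho(w)) \le n - \lambda_1(\rho(w)).$$
By Lemma~\ref{lem:rho and runs},
$$\run(w) \le \run(\rho(w)) + 1 \le n - \lambda_1(\rho(w)) + 1.$$
By Lemma~\ref{lem:rho has a longer subsequence}, $\lambda_1(\rho(w)) > \lambda_1(w)$, and since these are integers we have $\lambda_1(\rho(w)) \ge \lambda_1(w) + 1$. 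Substituting gives
$$\run(w) \le n - (\lambda_1(w) + 1) + 1 = n - \lambda_1(w),$$
which is the desired bound.

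There is essentially no obstacle beyond verifying that the three lemmas compose correctly: the whole point of Definition~\ref{defn:the map rho} and the subsequent lemmas was to engineer a single move that simultaneously (i) reduces length so induction is available, (ii) adds at most one run, and (iii) grows the longest increasing subsequence by at least one. The argument just stitches these three facts together. Combined with the matching lower bound from Proposition~\ref{prop:lower bound on runs}, this will yield the identity $\lambda_1(w) + \run(w) = n$ claimed in Theorem~\ref{thm:run and lambda1}.
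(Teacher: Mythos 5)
Your proof is correct and follows essentially the same route as the paper: induction on Coxeter length with the identity as base case, then chaining Lemma~\ref{lem:rho shortens a permutation} (to apply the inductive hypothesis to $\rho(w)$), Lemma~\ref{lem:rho and runs}, and Lemma~\ref{lem:rho has a longer subsequence} in exactly the order the paper uses. No differences worth noting.
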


\begin{proof}
If $w$ is the identity, then $\lambda_1=n$ and $\run(w)=0$,  and the result is trivially true. Suppose that $w$ is not the identity permutation. Assume, inductively, that the result holds for all permutations shorter than $w$. In particular, since Lemma~\ref{lem:rho shortens a permutation} tells us that $\rho(w)$ is shorter than $w$, we have
$$\run(\rho(w)) \le n - \lambda_1(\rho(w)).$$
Next, note that, because $\lambda_1$ takes only integer values, we can rewrite Lemma~\ref{lem:rho has a longer subsequence} as
$$\lambda_1(w) \le \lambda_1(\rho(w)) - 1.$$
Combining this with Lemma~\ref{lem:rho and runs} gives
\begin{align*}
\run(w) &\le \run(\rho(w))+1\\
&\le n - \lambda_1(\rho(w)) + 1\\
&= n - (\lambda_1(\rho(w)) - 1)\\
&\le n - \lambda_1(w).
\end{align*}
\end{proof}

We can now prove the precise relationship between $\lambda_1$ and $\run$, for any permutation.

\begin{theorem}\label{thm:run and lambda1}
For any permutation $w \in S_n$,
$$\lambda_1(w) + \run(w) = n.$$
\end{theorem}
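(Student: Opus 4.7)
The plan is to prove Theorem~\ref{thm:run and lambda1} by simply sandwiching $\run(w)$ between two matching bounds that have already been established in this section. Proposition~\ref{prop:lower bound on runs} supplies the lower bound
\[
n - \lambda_1(w) \le \run(w),
\]
and Proposition~\ref{prop:upper bound on runs} supplies the reverse inequality
\[
\run(w) \le n - \lambda_1(w).
\]
Chaining these together forces equality, $\run(w) = n - \lambda_1(w)$, which rearranges to the claimed identity $\lambda_1(w) + \run(w) = n$. No case analysis on $w$ is needed, since both propositions are stated for arbitrary $w \in S_n$ (including the identity, where $\lambda_1 = n$ and $\run = 0$ make both bounds trivial).

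The main obstacle has already been navigated upstream; there is nothing substantive left to do at the level of the theorem itself. All of the real work lives in Proposition~\ref{prop:upper bound on runs}, which required building the reduction map $\rho$ of Definition~\ref{defn:the map rho} and simultaneously tracking three permutation statistics through its action: Coxeter length decreases (Lemma~\ref{lem:rho shortens a permutation}), the longest increasing subsequence strictly lengthens (Lemma~\ref{lem:rho has a longer subsequence}), and the run count grows by at most one (Lemma~\ref{lem:rho and runs}). Once those three invariants are in place, the induction on $\ell(w)$ in Proposition~\ref{prop:upper bound on runs} closes the upper bound, and the theorem is just a two-line corollary of combining it with the already-known lower bound from \cite{MT22}.
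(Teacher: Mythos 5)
Your proposal is correct and matches the paper's own proof exactly: the theorem is obtained by combining the lower bound of Proposition~\ref{prop:lower bound on runs} with the upper bound of Proposition~\ref{prop:upper bound on runs}. Your added commentary about where the real work lives (the map $\rho$ and the three lemmas feeding the induction) is accurate but not part of the theorem's proof itself.
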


\begin{proof}
This follows from Propositions~\ref{prop:lower bound on runs} and~\ref{prop:upper bound on runs}.
\end{proof}

Intuitively speaking, multiplying a permutation on the right by a run is the same as deleting an entry from the permutation and then inserting it somewhere else. 
For example, let $w = 253146 \in S_6$.  
If we multiply $w$ on the right by the run 
$\rw{321}$, 
we get $w  
\rw{321} 
= 125346$. 
We can see that $1$ is deleted from $w$ and then inserted to the beginning. 
In fact, $\run(w)$ is the minimum number of runs whose concatenation is a decomposition for $w$, where we drop the reduced condition.

\begin{lemma}
The minimum number of runs whose concatenation is a decomposition for $w$ is $\run(w)$.
\end{lemma}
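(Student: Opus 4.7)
\emph{Plan.} Denote by $\run^\ast(w)$ the minimum number of runs whose concatenation is a decomposition of $w$, allowing non-reduced decompositions. Since every reduced optimal run word for $w$ is already such a decomposition, $\run^\ast(w) \le \run(w)$ is immediate. I plan to establish the reverse inequality by producing, from any decomposition $w = r_1 r_2 \cdots r_k$ into $k$ runs, an increasing subsequence of $w$ of length at least $n - k$; combining this with Theorem~\ref{thm:run and lambda1} then gives $k \ge n - \lambda_1(w) = \run(w)$, as desired.

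\emph{Active value of a step.} A direct check shows that right multiplication of a permutation $\sigma$ by the increasing run permutation $\pi = s_j s_{j+1} \cdots s_m$ has the following effect on the one-line notation: the value in position $j$ is moved to position $m+1$, the values in positions $j+1,\ldots,m+1$ are each shifted one place to the left, and all other positions are unchanged. The decreasing run $s_m s_{m-1} \cdots s_j$ acts symmetrically, with the value in position $m+1$ jumping to position $j$ while positions $j,\ldots,m$ shift one place to the right. Call the value making this long jump the \emph{active value} of the step. The key claim is that if $u$ and $v$ are distinct values in $\sigma$ and \emph{neither} is the active value of the step, then their left-to-right order in the one-line notation is preserved by the multiplication; this is a short case analysis on whether the positions of $u$ and $v$ lie inside or outside the affected window of positions, with the mixed case relying on the fact that a single-position shift of a value inside the window cannot move it past the window boundary.

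\emph{Finishing.} Starting from the identity and right-multiplying successively by the run permutations $\pi_1, \ldots, \pi_k$ associated to $r_1, \ldots, r_k$ produces $w$ after $k$ steps, each with a well-defined active value; let $A \subseteq \{1, \ldots, n\}$ be the set of values that are active in at least one step, so $|A| \le k$. By the order-preservation claim, any two values in the complement $\{1, \ldots, n\} \setminus A$ retain their relative left-to-right order at every step, and since they begin in increasing order in the identity, they are still in increasing order in $w$. Hence $w$ contains an increasing subsequence of length at least $n - |A| \ge n - k$, which gives $\lambda_1(w) \ge n - k$ and, by Theorem~\ref{thm:run and lambda1}, $k \ge \run(w)$. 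The main piece of actual work is the case analysis asserting preservation of relative order for non-active values, but that is routine bookkeeping rather than a genuine obstacle.
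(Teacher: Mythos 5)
Your proof is correct and follows essentially the same route as the paper: both arguments observe that multiplying by a run is an Ulam move affecting a single ``jumping'' value, deduce that a $k$-run decomposition forces $\lambda_1(w)\ge n-k$, and conclude via Theorem~\ref{thm:run and lambda1}. The only difference is that you explicitly construct the increasing subsequence from the non-active values, whereas the paper simply asserts that each run changes the length of a longest increasing subsequence by at most one; your version fills in that detail but is not a different approach.
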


\begin{proof}
Clearly, the minimum number of runs whose concatenation is a decomposition for $w$ is at most $\run(w)$. We prove the other inequality.
Let $w\in S_n$ and let $w = \rho_1 \rho_2 \dots \rho_\ell$ where $\rho_i$ is a run for $1\leqslant i \leqslant \ell$. 
Since a run deletes one entry and inserts it somewhere else, it can change the length of a longest increasing subsequence by at most one. 
That is, the length of a longest increasing subsequence of the permutation $\rho_1 \rho_2 \dots \rho_\ell$ is at least $n-\ell$. 
So $\lambda_1(w)$ is at least $n-\ell$. It follows that $\ell \geqslant n-\lambda_1(w) = \run(w)$.
\end{proof}

With the equivalent definition of $\run(w)$ as being the minimum number of runs whose concatenation is a decomposition for $w$, our statistic $\run(w)$ recovers the {``Ulam distance"} on permutations. 
Ulam's metric was originally defined to study mutation of DNA sequences 
from the perspective of permutations
~\cite{Ula72} as well as to find the fastest way to sort a bridge hand of thirteen cards~\cite{AD99}. 
An \emph{Ulam move} 
deletes a value from the current permutation and places it at some other position. Correspondingly, the \emph{Ulam distance}  $U(\sigma,\tau)$ is the minimum number of Ulam moves needed to obtain $\tau$ from $\sigma$. 
See also~\cite{BB15}.
In fact, our Theorem~\ref{thm:run and lambda1} is equivalent to a statement about Ulam's distance given in ~\cite[Chapter~6B, Lemma~2]{Dia88}.  
Note that our proof is constructive (via Definition~\ref{defn:the map rho} of the map $\rho$), in contrast to the proof of the latter.

Note that, in general, applying the function $\rho$ is not the same as applying an Ulam move, as  
in the last case of $\rho$. 
We give an algorithm for sorting a permutation $w$ to the identity permutation by applying a shortest sequence of Ulam moves, as follows.
First, we apply $\rho$ repeatedly until we arrive at the identity permutation, giving us an optimal run word. Next, we read the runs of this optimal run word from right to left, one run at a time; each run corresponds to applying an Ulam move. 
In the following example, we demonstrate our algorithm for sorting a permutation.

\begin{example}\label{ex:Ulam 51642738 third case}
Continuing with Example~\ref{ex:lemmas about rho}, let $w = 51642738$. 
We apply $\rho$ repeatedly:
$\rho(w)=\rw{23}w=
51623{7}{4}8$
,  
$\rho^2(w)=\rw{1234}\rho(w)=12{6}347{5}8$,
$\rho^3(w)=\rw{345}\rho^2(w)=12345768$, and 
$\rho^4(w)=\rw{6}\rho^3(w)=12345678$. This gives an optimal run word $\rw{32 \cdot 4321 \cdot 543 \cdot 6}$ of $w$ which tells us how to sort $w$ using 
minimally many Ulam moves. Reading the four runs of this optimal run word from right to left, we apply the following Ulam moves to 51642738: 
\begin{enumerate}[1)]
    \item delete the sixth number, $7$, from $w$ and insert it after the number $3$, producing $51642378$; 
    \item delete the third number, $6$, and insert it after the number $3$, producing $51423678$; 
    \item delete the first number, $5$, and insert it after the number $3$, producing $14235678$; 
    \item delete the second number, $4$, and insert it after the number $3$, producing the identity permutation $12345678$. 
\end{enumerate}
\end{example}

\section{Canonical reduced words and the second row of RSK tableaux}\label{sec.second.row}

In this section, we construct and study a particular optimal run word for a boolean permutation $w$. We call it the \emph{RSK canonical reduced word} (\emph{canonical word} for short) of $w$ and denote it by $\canonicalword{w}$. In Subsection~\ref{subsection:canonical word}, we will present two different algorithms to produce $\canonicalword{w}$: in Definition~\ref{defn:canonical word}, we start from an arbitrary reduced word of $w$ and apply commutation relations; in Definition~\ref{defn:canonical word:heap}, we construct $\canonicalword{w}$ from the heap $H_w$, which gives a convenient visualization of $\canonicalword{w}$. In Subsection~\ref{subsection:canonical to rsk tableaux}, we establish an application of $\canonicalword{w}$. We show that $\canonicalword{w}$ directly produces $\P(w)$ and $\Q(w)$ without using the RSK insertion procedure.

\subsection{Constructing canonical words}\label{subsection:canonical word}
The first algorithm was inspired by a technique used in the proof of \cite[Theorem 6.4]{MT22}. Essentially, 
it uses commutation moves to push decreasing runs to the left within the word and increasing runs to the right, starting with runs on the smallest numbers. 

\begin{definition}
\label{defn:canonical word}
Let $w$ be a boolean permutation, and $\rw{s}\in R(w)$ an arbitrary reduced word.

\begin{enumerate}[Step (1):]
    \item 
\label{itm:defn:canonical word:let a be smallest}
Let $a$ be the smallest value appearing in $\rw{s}$. Apply commutation moves to push a run to the left or right of $w$ according to the following instructions.
\begin{enumerate}
\item  \emph{(Push a singleton run to the \textcolor{red}{left}.)}
If $a+1$ \textcolor{red}{does not appear} in $\rw{s}$, then define $w'$ so that $w=\textcolor{red}{\rw{a}}w'$.

\item \emph{(Push a decreasing run to the \textcolor{red}{left}.)}
If $a+1$ appears to the \textcolor{red}{left} of $a$ in $\rw{s}$, then let $b \ge a+1$ be the largest such that the run $b(b-1)\cdots a$ is a subsequence of $s$. 
Define $w'$ so that $w = \textcolor{red}{\rw{b (b-1)\cdots a}} w'$. 
\item \emph{(Push an increasing run to the \textcolor{red}{right}.)}
If $a+1$ appears to the \textcolor{red}{right} of $a$ in $\rw{s}$, 
then let 
$b \ge a+1$ be the largest 
such that the run $a\cdots(b-1)b$ is a subsequence of $s$. Define $w'$ so that $w = w' \textcolor{red}{\rw{a\cdots (b-1)b}}$.
\end{enumerate}

\item If $w'$ is not the identity permutation, repeat Step \eqref{itm:defn:canonical word:let a be smallest} on an arbitrary reduced word for $w'$. 
If $w'$ is the identity, 
we are done.
\end{enumerate}
The reduced word of $w$ created by this algorithm is $\canonicalword{w}$.
\end{definition}

Alternatively, we can construct the same canonical reduced word given in Definition~\ref{defn:canonical word} by using the heap  (defined in Section~\ref{sec:heap of boolean permutation})
of a boolean permutation. 

\begin{definition}
\label{defn:canonical word:heap}
Let $H$ be the heap diagram of a boolean permutation $w$, drawn in increasing order from left to right.
Start with two empty lists $\Decr:=\Decr{(H)}$ 
and $\Incr:=\Incr{(H)}$. We will scan the elements 
of $H$ from left to right and fill these two lists with decreasing and increasing runs, respectively.

\begin{enumerate}[Step (1)]
\item \label{itm:defn:canonical word:heap:let a be smallest}
Let $a$ be the leftmost element (that is, smallest number) of $H$.

\begin{enumerate}
\item 
If $a+1$ \textcolor{red}{is not an element} in $H$, then \textcolor{red}{append} the singleton run $\textcolor{red}{a}$ to the list of $\Decr$. 

\item 
If 
\textcolor{red}{
$a \succ a+1$} in $H$,
then let $b$ be the first extremal element of $H$ (necessarily minimal) to the right of $a$.
\textcolor{red}{Append} the decreasing run $\textcolor{red}{b(b-1)\cdots a}$ to the list $\Decr$.

\item 
If 
\textcolor{red}{
$a \prec a+1$} in $H$,
then let $b$ be the first extremal element of $H$  (necessarily maximal) to the right of $a$.
\textcolor{red}{Prepend} the increasing run $\textcolor{red}{a(a+1)\cdots b}$ to the list $\Incr$.
\end{enumerate} 

\item 
Let $H'$ be the diagram obtained by removing the singleton $a$ from $H$ (Case~(a)) 
or 
the elements $a, a+1, \dots, b$ (Case~(b) or~(c)).  If $H'$ is not empty, redefine $H:=H'$, 
and repeat Step~\eqref{itm:defn:canonical word:heap:let a be smallest}. 
If $H'$ is empty, we are done.
\end{enumerate}

\noindent If Dec is nonempty, 
let $\canonicalword{\textup{Dec}}$ be the concatenation of the decreasing runs in Dec, with smaller indices appearing first; otherwise, let it be the empty word. 
If Inc is nonempty, let
$\canonicalword{\textup{Inc}}$ be the concatenation of the increasing runs in Inc, with larger indices appearing first; otherwise, let it be the empty word. The reduced word $\canonicalword{w}$ is the concatenation of $\canonicalword{\textup{Dec}}$ and $\canonicalword{\textup{Inc}}$, in that order.
\end{definition}

We can verify that Definitions~\ref{defn:canonical word} and~\ref{defn:canonical word:heap} are equivalent, as follows. By Proposition \ref{prop:set of labeled linear extensions is commutativity class}, we know that a reduced word $\rw{s}$ for a boolean permutation $w$ corresponds to a linear extension of the heap $H_w$. In addition, we observe:
\begin{enumerate}
\item $x\in H_w$ if and only if $x \in \rw{s}$;
\item $x \succ x+1$ in $H_w$ if and only if $x+1$ is to the left of $x$ in $\rw{s}$;
\item $x \prec x+1$ in $H_w$ if and only if $x+1$ is to the right of $x$ in $\rw{s}$.
\end{enumerate}
Therefore we obtain the same decreasing run (including singleton) or increasing run at each iteration. Furthermore, appending a decreasing run is equivalent to pushing a decreasing run to the left while prepending an increasing run is the same as pushing an increasing run to the right.

Next, we demonstrate this canonical word construction using two boolean permutations: one having full support and one not.

\begin{example}\label{ex:canonical word in S10}
Consider the boolean permutation $314627(10)589 \in S_{10}$.
First, we construct $\canonicalword{w}$ following~Definition~\ref{defn:canonical word}.
\begin{itemize}
\item We start with an arbitrarily chosen reduced word: $\rw{259136847} \in R(w)$.
\item First, $a=1$. Since $a+1=2$ is to the left of $a$, and $b=2$, we push the decreasing run $\rw{21}$ to the left and write $w = \rw{\textcolor{red}{21} \cdot5936847}$.
\item 
Now we look at $w'=\rw{5936847}$. In this case, $a=3$. Since $a+1=4$ is to the right of $a$, and $b=4$, we push the increasing run $\rw{34}$ to the right: $w'=\rw{59687\cdot \textcolor{red}{34}}$.
\item 
Now we look at $w''=\rw{59687}$. Here $a=5$, we have $a+1=6$ to the right of $a$, and $b=7$, so we push the increasing run $\rw{567}$ to the right: $w''=\rw{98\cdot \textcolor{red}{567}}$.
\item 
We are left with $\rw{98}$, which is a run, so we are done.
\end{itemize}
Our steps produce the following;
$$  \rw{259136847} 
   \rightsquigarrow 
    \rw{21\cdot5936847} 
    \rightsquigarrow 
    \rw{21\cdot 59687\cdot34}
    \rightsquigarrow
    \rw{21\cdot98\cdot567\cdot34}
    = \canonicalword{w}.$$
We could also have constructed $\canonicalword{w}$ using heaps, according to Definition~\ref{defn:canonical word:heap}. The heap $H$ of $w$ is shown in Figure~\ref{fig:fence_reduced_word259136847}. 
As we go from left to right along the elements of $H$, we create $\textup{Dec}=\{ \textcolor{black}{\rw{21}}, \textcolor{black}{\rw{98}} \}$ 
and $\textup{Inc}=\{ \textcolor{black}{\rw{567}}, \textcolor{black}{\rw{34}} \}$. 
Then $\canonicalword{\textup{Dec}}=\textcolor{black}{\rw{21\cdot98}}$ and $\canonicalword{\textup{Inc}}=\textcolor{black}{\rw{567\cdot34}}$.
Their concatenation, in that order, produces the reduced word $\canonicalword{w}$ given above. 
\end{example}

The boolean permutation in the next example does not have full support, so its heap diagram is disconnected.

\begin{example}\label{ex:disconnected}
Consider the boolean permutation $w=231548697(11)(10) \in S_{11}$.
We can again construct $\canonicalword{w}$ following~Definition~\ref{defn:canonical word}.
\begin{itemize}
\item We start with an arbitrarily chosen reduced word: $\rw{471(10)268} \in R(w)$.
\item First, $a=1$. Since $a+1=2$ is to the right of $a$, and $b=2$, we push the increasing run $\rw{12}$ to the right and write $w = \rw{47(10)68\cdot \textcolor{red}{12}}$.
\item Now we look at $w'=\rw{47(10)68}$. In this case, $a=4$. Since $a+1=5$ does not appear in $\rw{47(10)68}$, we write $\rw{47(10)68}= \rw{\textcolor{red}{4}\cdot 7(10)68}$.
\item Now we look at $w''=\rw{7(10)68}$.
Here $a=6$. Since $a+1=7$ is to the left of $a$, and $b = 7$, we push the decreasing run $\rw{76}$ to the left and write $\rw{7(10)68}=\rw{\textcolor{red}{76}\cdot (10)8}$.
\item Now we look at $w'''=\rw{(10)8}$. Here $a=8$ and we push this singleton to the left and write $\rw{(10)8}= \rw{\textcolor{red}{8}\cdot (10)}$.
\item What remains is the singleton run $\rw{(10)}$, so we are done.
\end{itemize}
Our steps produce the following:
\begin{align*}
    \rw{471(10)268}
    &\rightsquigarrow
    \rw{47(10)68\cdot12} 
    \rightsquigarrow
    \rw{4\cdot7(10)68\cdot12}
    \rightsquigarrow
    \rw{4\cdot 76\cdot(10)8\cdot12}\\
    &\rightsquigarrow
    \rw{4\cdot 76\cdot8\cdot (10)\cdot12}
    =\canonicalword{w}.
\end{align*}
The heap construction would have produced the same result: the heap of $w$ is shown in Figure~\ref{fig:rw471_10_268}, creating $\textup{Dec}=\{ \textcolor{black}{\rw{4}}, \textcolor{black}{\rw{76}}, \textcolor{black}{\rw{8}}, \textcolor{black}{\rw{(10)}} \}$ and $\textup{Inc}=\{ \textcolor{black}{\rw{12}} \}$. 
Then $\canonicalword{\textup{Dec}}= \textcolor{black}{\rw{4\cdot76\cdot8\cdot(10)}}$ and $\canonicalword{\textup{Inc}}= \textcolor{black}{\rw{12}}$, and their concatenation, in that order, produces $\canonicalword{w}$.
\end{example}

\def\IncrColor{black} \def\DecrColor{black}
\begin{figure}[htb!]
\centering
\begin{tikzpicture}[xscale=1.5,yscale=1.5,>=latex]
\node(1) at (1,1) {$1$};
\node(2) at (2,0) {$2$}; 
\node(3) at (3,1) {$3$}; 
\node(4) at (4,2) {$4$};  
\node(5) at (5,1) {$5$};  
\node(6) at (6,2) {$6$}; 
\node(7) at (7,3) {$7$}; 
\node(8) at (8,2) {$8$}; 
\node(9) at (9,1) {$9$}; 

\draw[-] (1) -- (2); 
\draw[-] (2) -- (3); 
\draw[-] (3) -- (4); 
\draw[-] (4) -- (5); 
\draw[-] (5) -- (6); 
\draw[-] (6) -- (7);
\draw[-] (7) -- (8);
\draw[-] (8) -- (9);

\draw[rotate around={45:(1.5,0.5)}, dashed, \DecrColor] (1.5,0.5) ellipse (3mm and 11mm);

\draw[rotate around={-45:(3.5,1.5)}, dashed, \IncrColor] (3.5,1.5) ellipse (3mm and 11mm);

\draw[rotate around={-45:(6)}, dashed, \IncrColor] (6) ellipse (3.7mm and 20mm);

\draw[rotate around={45:(8.5,1.5)}, dashed, \DecrColor] (8.5,1.5) ellipse (3mm and 11mm);

\end{tikzpicture}
\caption{Heap diagram for the boolean permutation having canonical reduced word $\rw{21\cdot98\cdot567\cdot34}$.}
\label{fig:fence_reduced_word259136847}
    
\begin{tikzpicture}[xscale=1.5,yscale=1.5,>=latex]
\node(1) at (1,0) {$1$}; 
\node(2) at (2,1) {$2$}; 
\node(4) at (4,0) {$4$};  
\node(6) at (6,1) {$6$}; 
\node(7) at (7,0) {$7$}; 
\node(8) at (8,1) {$8$}; 
\node(10) at (10,0) {$10$}; 

\draw[-] (1) -- (2); 
\draw[-] (6) -- (7);
\draw[-] (7) -- (8); 

\draw[rotate around={-45:(1.5,0.5)}, dashed, \IncrColor] (1.5,0.5) ellipse (3mm and 11mm);

\draw[dashed, \DecrColor] (4) ellipse (3mm and 3mm);

\draw[rotate around={45:(6.5,0.5)}, dashed, \DecrColor] (6.5,0.5) ellipse (3mm and 11mm);

\draw[dashed, \DecrColor] (8) ellipse (3mm and 3mm);
\draw[dashed, \DecrColor] (10) ellipse (3mm and 3mm);
\end{tikzpicture}
\caption{Heap diagram for the boolean permutation having canonical reduced word $\rw{4\cdot76\cdot8\cdot(10)\cdot12}$.}
\label{fig:rw471_10_268}
\end{figure}

\subsection{From canonical reduced words to RSK tableaux}\label{subsection:canonical to rsk tableaux}

We now use $\canonicalword{w}$ to simply and directly construct $\P(w)$ and $\Q(w)$. 
Note that because a boolean permutation has at most two rows in its RSK partition, the RSK tableaux are completely determined by the values appearing in their second rows.

Let $\Rowone(T)$ (resp., $\RowTwo(T)$)  
 denote the contents of the first (resp., second) row of a tableau $T$. So, in particular, $\Rowtwo{w}$ and 
$\RowtwoQ{w}$ denote the second rows of $\P(w)$ and $\Q(w)$, respectively.

\begin{theorem}\label{thm:boolean second row}
If $w$ is boolean, then 
\begin{center}
$\Rowtwo{w}=$ $\{ i + 1$ $\mid$ $i$ is the leftmost entry in a run of $\canonicalword{w}\}$
\end{center}
and
\begin{center}
$\RowtwoQ{w}=$ $\{ i + 1$ $\mid$ $i$ is the rightmost entry in a run of $\canonicalword{w}\}.$
\end{center}
\end{theorem}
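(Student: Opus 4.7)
The plan is to establish the theorem by induction on $|\canonicalword{w}|$ for the $\P$-statement, and then to deduce the $\Q$-statement from the symmetry $\Q(w) = \P(w^{-1})$ of Proposition~\ref{prop:P w inverse is Q w}. The first step is an auxiliary observation: $\canonicalword{w^{-1}}$ is the letter-reversal of $\canonicalword{w}$. Indeed, the heap $H_{w^{-1}}$ is obtained from $H_w$ by flipping every cover relation (since $i$ appears to the left of $i+1$ in the reduced words of $w$ iff it appears to the right in those of $w^{-1}$). Running Definition~\ref{defn:canonical word:heap} on $H_{w^{-1}}$ then interchanges cases (b) and (c) and swaps ``append'' with ``prepend,'' producing exactly the letter-reversal of $\canonicalword{w}$. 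Because reversing a run swaps its leftmost and rightmost entries, the $\Q$-claim for $w$ follows from the $\P$-claim applied to $w^{-1}$.

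For the $\P$-claim, the base $\canonicalword{w} = \emptyset$ is immediate. For the inductive step, let $r_k$ be the last run of $\canonicalword{w}$ and set $w' = w \cdot r_k^{-1}$. Comparing the heap algorithm on $H_w$ and $H_{w'}$ shows that the two executions agree except that the step producing $r_k$ is simply skipped when running on $H_{w'}$, so $\canonicalword{w'}$ is precisely $\canonicalword{w}$ with the letters of $r_k$ removed; the inductive hypothesis therefore applies to $w'$.

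I would then carry out the RSK analysis comparing $\P(w)$ and $\P(w')$. Treating the case $r_k = a_k(a_k{+}1)\cdots b_k$ increasing (the decreasing case is symmetric), the disjointness of index ranges among the remaining runs of $\canonicalword{w'}$ forces positions $a_k+1, \ldots, b_k$ of $w'$ to hold the identity values $a_k+1, \ldots, b_k$. Hence $w$ and $w'$ agree outside $[a_k, b_k+1]$, while on that range $w$ records the values $a_k+1, a_k+2, \ldots, b_k, w'(b_k+1), w'(a_k)$ at positions $a_k, a_k+1, \ldots, b_k+1$. The partial tableaux through step $a_k - 1$ coincide; step $a_k$ appends $a_k+1$ to row $1$; steps $a_k+1$ through $b_k$ append the strictly increasing sequence $a_k+2, \ldots, w'(b_k+1)$; and step $b_k+1$ inserts $w'(a_k) \leq a_k$, which bumps the smallest row-$1$ value exceeding $w'(a_k)$, namely $a_k+1$. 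This yields $\Rowtwo{w} = \Rowtwo{w'} \cup \{a_k + 1\}$, as required.

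The principal obstacle is verifying that the value bumped at step $b_k + 1$ is indeed $a_k+1$ rather than some other row-$1$ entry. In the subcase $w'(a_k) = a_k - 1$, one must rule out the possibility that the value $a_k$ is sitting in row $1$ at that moment (in which case it would be bumped first). This reduces to a sub-claim: whenever $a_k - 1 \in \supp(w)$, the value $a_k$ has already been bumped into row $2$ by step $a_k - 1$, which one verifies by inspecting the decreasing run of $\canonicalword{w}$ ending at $a_k - 1$ and tracing the first few RSK insertions within that run. The analysis for $\canonicalword{\textup{Inc}} = \emptyset$ (last run decreasing) is handled symmetrically, with the inserted value $b_k + 1$ at position $a_k$ playing the role that $a_k + 1$ plays above, and the bump occurring already at step $a_k + 1$ rather than at step $b_k + 1$.
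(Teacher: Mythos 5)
Your decomposition is genuinely different from the paper's, and the difference is precisely what creates the difficulty you flag. The paper peels off the run containing the \emph{smallest} letter of $\supp(w)$ (the first run of the word when that run is decreasing or a singleton, the last run when it is increasing). With that choice the residual permutation $w'$ has support contained in $\{b+1,\ldots,n-1\}$, so it fixes $1,\ldots,b$ pointwise, the relevant partial tableau is a single trivial row, and the comparison of $\P(w)$ with $\P(w')$ requires no knowledge of where earlier values sit. You instead peel off the \emph{last} run $r_k=a_k\cdots b_k$ of the word, so the decreasing runs with indices below $a_k$ remain inside $w'$, the partial tableau $T=\P_{a_k-1}(w')$ is nontrivial, and everything hinges on your sub-claim that when $a_k-1\in\supp(w)$ (equivalently $w'(a_k)=a_k-1$) the value $a_k$ already sits in $\RowTwo(T)$. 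You are right that this is the crux: if $a_k$ were still in row $1$ of $T$, the insertion of $a_k-1$ would bump $a_k$ in both $\P(w)$ and $\P(w')$, giving $\P(w)=\P(w')$, and the claimed extra entry $a_k+1$ would never appear.

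The gap is that your proposed verification --- inspecting the decreasing run ending at $a_k-1$ and tracing the first few insertions \emph{within that run} --- is not local to that run. Write that run as $\rw{(a_k-1)\cdots a_j}$; the value $a_k$ enters row $1$ at step $a_j$, and the value inserted at step $a_j+1$ is $v'(a_j)$, where $v'$ is the product of the earlier decreasing runs. If the run has length at least $3$, the value $a_j+1$ arrives at step $a_j+2\le a_k-1$ and is guaranteed to bump $a_k$, so a local check does suffice. But if the run has length $2$ (so $a_j=a_k-2$) and is immediately preceded by a decreasing run ending at $a_k-3$, the only opportunity before the cutoff is step $a_k-1$, where the value $a_k-3$ is inserted; this bumps $a_k$ only if the intermediate value $a_k-2$ is already in row $2$, which is exactly the same sub-claim one run earlier. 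The word $\rw{21\cdot 43\cdot 65\cdot 78}$ already exhibits this chaining: there the value $7$ is bumped at step $6$ by the value $4$, and this works only because $5$ was forced into row $2$ by the preceding runs. So the sub-claim needs its own induction along the chain of decreasing runs, carried as a strengthened invariant with your main induction; alternatively it can be disposed of indirectly, since $\P(w)=\P(w')$ would force $\lambda_1(w)=\lambda_1(w')$, contradicting Theorem~\ref{thm:run and lambda1} together with $\run(w)=\run(w')+1$. Either repair completes your argument, but as written the step is not justified. (A minor point: $\canonicalword{w^{-1}}$ is the letter-reversal of $\canonicalword{w}$ only up to reordering singleton runs and commuting distant runs, which is harmless since leftmost and rightmost run entries are unaffected.)
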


\begin{proof}
We first note that the two statements are equivalent, thanks to Proposition~\ref{prop:P w inverse is Q w} and the fact that reduced words for the inverse permutation $w^{-1}$ are exactly the reverse of the reduced words for $w$, from which it follows that $\canonicalword{w^{-1}}$ is the reverse word of  $\canonicalword{w}$ up to reordering the singleton runs. Therefore it suffices to prove the first statement, about $\Rowtwo{w}$.

It is straightforward to verify this result when $w$ is the identity permutation and when the canonical reduced word for $w$ consists of a single run. Assume now that the statement is true for all boolean permutations with canonical words having $k$ runs for some $k\geq 1$. Consider a boolean permutation $w\in S_n$ having $k+1$ runs in its canonical word.

Let $a$ be the smallest value in the support of $w$.
There are three cases:
\begin{enumerate}[I.]
\item \label{itm:thm:boolean second row:a}
$a+1$ does not appear in any reduced word for $w$,
\item \label {itm:thm:boolean second row:b}
$a+1$ appears to the left of $a$ in all reduced words for $w$, or
\item \label {itm:thm:boolean second row:c}
$a+1$ appears to the right of $a$ in all reduced words for $w$.
\end{enumerate}

\noindent \textbf{Case ~\ref{itm:thm:boolean second row:a}:} If $a+1$ does not appear in any reduced word for $w$, then $a$ is its own maximal run in all reduced words of $w$. Thus $w = \rw{a}w' = w'\rw{a}$ where $w'$ is a boolean permutation, $\canonicalword{w}=[a]\canonicalword{w'}$, $\supp(w') \subseteq \{a+2,\ldots, n-1\}$, and $\run(w') =  \run(w)-1$. Therefore $\Rowtwo{w'}\subseteq \{a+3,\dots,n\}$.
Furthermore, we have that $w'(i)=i$ for $i\leqslant a+1$, that is, the one-line notation of $w'$ is as follows:
\[
w' = 1\ \ 2 \ \ \cdots \ \ a \ \ (a+1) \ \ w'(a+2) \ \ \cdots \ \ w'(n)\,.
\]
Since $a+1 \notin \Rowtwo{w'}$ and the one-line notation for $w$ is
\[
w = 1\ \ 2 \ \ \cdots \ \ (a+1)\ \ a \ \ w'(a+2)\ \ \cdots \ \ w'(n)\,,
\]
we have that $\Rowtwo{w} = \Rowtwo{w'}\cup \{a+1\}$  and thus the inductive hypothesis on $w'$ completes the argument.

\smallskip

\noindent \textbf{Case~ 
\ref{itm:thm:boolean second row:b}:} 
Suppose $a+1$ is to the left of $a$ in all reduced words for $w$.
As in Definition~\ref{defn:canonical word}, let $b \ge a+1$ be the maximum value such that the decreasing run $\rw{b(b-1)\cdots a}$ is a subsequence of $\canonicalword{w}$.
Then $w = \rw{b \cdots (a+1)a} w'$, where $w'$ is a boolean permutation, $\canonicalword{w}=\rw{b \cdots (a+1)a} \canonicalword{w'}$, $\supp(w') \subseteq \{b+1,\ldots, n-1\}$, and $\run(w')= \run(w)-1$.

Because $1,\dots,b$ are not in the support of $w'$, these values are fixed by $w'$. The permutation $w$ is the result of multiplying $w'$ on the left by the run
$\rw{b(b-1)\cdots a}$. Therefore
$$w = 1 2 \cdots (a-1)\, \textcolor{red}{(b+1)a(a+1)\cdots (b-2)(b-1)}\,w(b+1)\cdots w(n)$$
where, for $b+1\leq i\leq n$, 
\[
w(i) = 
\begin{cases}
b & \text{ if $w'(i) =b+1$, and}\\
w'(i) & \text{ otherwise}.
\end{cases}
\]

Since the first $b$ values in the one-line notation of $w'$ are fixed, 
the partial insertion tableau $\P_b(w')$ is the 1-row insertion tableau of the identity permutation: $\P(12\cdots b)$. All entries to the right of $b$ in the one-line notation of $w'$ are larger than $b$, so none of the numbers $1,2,\dots,b$ will get bumped to the second row as we continue the insertion algorithm to produce $\P(w')$. Thus all numbers in $\Rowtwo{w'}$ are larger than $b$.

Meanwhile, the partial insertion tableau $\P_{b}(w)$ has $1, \ldots, (b-1)$ in the first row and $b+1$ in the second row: 
\[\P_b(w)=
\begin{ytableau}
1 & 2 & \null & \null & {\cdots} & \null & \null & {\scriptstyle b-1} \\
{\scriptstyle b+1}
\end{ytableau}. \]
All entries to the right of $b-1$ in the one-line notation of $w$ are larger than $b-1$, so none of the numbers $1,2,\dots, b-1$ will get bumped to the second row in the formation of $\P(w)$.

If $b+1$ bumps some larger value $w'(j)$ (necessarily $w'(j) \neq b+1$) in the construction of $\P(w')$, then the same value $w(j)=w'(j)$ would be bumped by $b$ in the construction of $\P(w)$. It follows that $\Rowtwo{w}= \Rowtwo{w'}\cup\{b+1\}$, and the inductive hypothesis on $w'$ completes the proof.

\smallskip

\noindent \textbf{Case~\ref {itm:thm:boolean second row:c}:} Suppose $a+1$ is to the right of $a$ in all reduced words for $w$.
As in Definition~\ref{defn:canonical word}, let 
$b \ge a+1$ be the maximum value in $\canonicalword{w}$ such that $\rw{a\cdots(b-1)b}$ is a subsequence of $\canonicalword{w}$. 
Then $w = w'[a(a+1) \cdots b]$ where $w'$ is a boolean permutation, $\canonicalword{w} = \canonicalword{w'}[a(a+1) \cdots b]$, $\supp(w') \subseteq \{b+1,\ldots, n-1\}$, and $\run(w')= \run(w)-1$.

The numbers $1,2,\dotsc,b$ are again fixed points of $w'$, so 
the one-line notation of $w'$ is of the form
$$w' = 1 2 \cdots (b-1)b\, w'(b+1)\cdots w'(n).$$
The fact that $w =w'\rw{a(a+1)\cdots b}$ means that
$$w = 1\cdots (a-1) \textcolor{red}{(a+1)(a+2) \cdots b \, w'(b+1)\, a\,} w'(b+2) \cdots w'(n).$$
Because $w'(b+1)>b$,
the first $b+1$ values in the one-line notation of $w'$ form an increasing sequence. Therefore
the partial insertion tableau $\P_{b+1}(w')$ is the $1$-row tableau:
\[
\ytableausetup
{mathmode, boxsize=11mm}
\P_{b+1}(w')=
\raisebox{-.15in}{\begin{ytableau}
1 & 2 & \null & \null & {\cdots} & \null & \null & b & {\scriptstyle w'(b+1)}
\end{ytableau}.} \]
Meanwhile, the partial insertion tableau $\P_{b+1}(w)$ has $1, \ldots, \widehat{a+1}, \dots, b,w'(b+1)$ in its first row and $a+1$ in the second row: 
\[
\ytableausetup
{mathmode, boxsize=11mm}
\P_{b+1}(w)=
\begin{ytableau}
1 & 2 & {\cdots} & a & {\scriptstyle a+2} & {\cdots} & b & {\scriptstyle w'(b+1)}\\
{\scriptstyle a+1}
\end{ytableau}. \]

The remaining steps of the RSK algorithm will bump exactly the same values for $w$ and for $w'$. Hence $\Rowtwo{w}= \Rowtwo{w'}\cup\{a+1\}$, and the result follows from the inductive hypothesis on $w'$.
\end{proof}

We demonstrate Theorem~\ref{thm:boolean second row} by recalling a previous example.

\begin{example}
Let $w = 314627(10)589$, and recall that $\canonicalword{w} = \rw{21\cdot 98\cdot 567\cdot 34}$, as computed in Example~\ref{ex:canonical word in S10}. 
The RSK tableaux for $w$ are 
\[\ytableausetup
{mathmode, boxsize=normal}
\P(w)=\begin{ytableau}1& 2& 5& 7& 8& 9\\ 
3& 4& 6& 10
\end{ytableau} \text{ \ and \ } 
\Q(w)=\begin{ytableau}1& 3& 4& 6& 7& 10\\ 2& 5& 8& 9
\end{ytableau},\]
confirming Theorem~\ref{thm:boolean second row}. That is, $\Rowtwo{w} = \{2+1, 9+1, 5+1, 3+1\}$ and $\RowtwoQ{w} = \{1+1, 8+1, 7+1, 4+1\}$.
\end{example}

\begin{corollary}\label{cor:canonicalword is optimal run word}
If $w$ is a boolean permutation, then $\canonicalword{w}$ is an optimal run word for $w$.
\end{corollary}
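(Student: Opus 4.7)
The plan is to combine Theorem~\ref{thm:run and lambda1} with Theorem~\ref{thm:boolean second row} to count the runs in $\canonicalword{w}$ exactly.

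First, I would count the number of runs in $\canonicalword{w}$ using the characterization of $\Rowtwo{w}$ given by Theorem~\ref{thm:boolean second row}: the second row of $\P(w)$ is exactly $\{i+1 : i \text{ is the leftmost entry of a run of } \canonicalword{w}\}$. Since $w$ is boolean, Proposition~\ref{prop:tfae boolean} ensures every letter appears at most once in $\canonicalword{w}$, so distinct runs have distinct leftmost entries, and hence distinct values of $i+1$. Therefore the number of runs in $\canonicalword{w}$ equals $|\Rowtwo{w}| = \lambda_2(w)$.

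Next, I would use the fact (noted after Theorem~\ref{thm:Schensted theorem}) that boolean permutations have RSK shape with at most two rows, so $\lambda_1(w) + \lambda_2(w) = n$. Combining this with Theorem~\ref{thm:run and lambda1}, which gives $\lambda_1(w) + \run(w) = n$, we conclude that $\lambda_2(w) = \run(w)$. Thus $\canonicalword{w}$ is a reduced word for $w$ written as a concatenation of exactly $\run(w)$ runs, which is the definition of an optimal run word.

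There is no real obstacle here, since all the hard work has been done in Theorems~\ref{thm:run and lambda1} and~\ref{thm:boolean second row}; the only subtlety is confirming that the leftmost-entries of distinct runs of $\canonicalword{w}$ are indeed distinct, which is immediate from booleanness. So the proof is a short deduction assembling the preceding theorems.
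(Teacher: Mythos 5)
Your proposal is correct and matches the paper's argument, which likewise derives the corollary immediately from Theorems~\ref{thm:run and lambda1} and~\ref{thm:boolean second row}; you have simply spelled out the short deduction (number of runs of $\canonicalword{w}$ equals $|\Rowtwo{w}| = \lambda_2(w) = n - \lambda_1(w) = \run(w)$) that the paper leaves implicit. No gaps.
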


\begin{proof}
This follows immediately from Theorem~\ref{thm:run and lambda1} and Theorem~\ref{thm:boolean second row}.
\end{proof}

Another interesting consequence of this result is that certain values cannot appear together in the second row of $\P(w)$ when $w$ is boolean. We present an example of this here, and the result will be generalized in Section \ref{sec:Characterizing boolean insertion tableaux}.

\begin{corollary}\label{cor:three in a row can't be in row 2}
If $w$ is a boolean permutation, then $\{i,i+1,i+2\}\not\subseteq \Rowtwo{w}$ for all $i$.
\end{corollary}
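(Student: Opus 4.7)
The plan is to argue by contradiction, leveraging the characterization of $\Rowtwo{w}$ provided by Theorem~\ref{thm:boolean second row}. Assume $\{i,i+1,i+2\} \subseteq \Rowtwo{w}$. Then each of $i-1$, $i$, and $i+1$ is the leftmost entry of some run in $\canonicalword{w}$, and since these three values are distinct they must lie in three distinct runs, which I will call $R_{i-1}$, $R_i$, and $R_{i+1}$.

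The key structural observation is that every run of $\canonicalword{w}$ is a set of consecutive integers whose leftmost entry is either the maximum (when the run is decreasing) or the minimum (when the run is increasing or a singleton). Since the three runs $R_{i-1}$, $R_i$, $R_{i+1}$ are pairwise disjoint, and $i-1 \in R_{i-1}$ while $i+1 \in R_{i+1}$, the run $R_i$, which contains $i$ and is a block of consecutive integers, cannot extend down to $i-1$ or up to $i+1$. Hence $R_i = \{i\}$ is a singleton; by the same reasoning, the minimum of $R_{i+1}$ is $i+1$.

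To derive a contradiction I would then invoke the heap algorithm of Definition~\ref{defn:canonical word:heap}: the runs are produced one at a time by processing the smallest remaining element of the heap, so $R_i$ (whose minimum is $i$) is extracted strictly before $R_{i+1}$ (whose minimum is $i+1$). In particular, $i+1$ is still present in the heap at the iteration when $R_i$ is formed. However, a singleton run can arise only from Case~(a) of the construction, which requires the value $a+1 = i+1$ to be absent from the heap at that moment. This contradiction completes the proof.

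I expect the main step to be the structural argument that forces $R_i$ to be the singleton $\{i\}$; once this is established, eliminating the singleton using the construction algorithm is a short verification. An alternative to invoking Definition~\ref{defn:canonical word:heap} would be to use Definition~\ref{defn:canonical word} directly: when the iterative word-based algorithm reaches the stage where $i$ is the smallest letter remaining, the letter $i+1$ has not yet been extracted (it belongs to the later run $R_{i+1}$), so neither Case~(a) nor the singleton sub-case of Cases~(b)/(c) can produce $R_i = \{i\}$.
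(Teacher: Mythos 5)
Your proof is correct and follows essentially the same route as the paper: both reduce the claim to leftmost run letters via Theorem~\ref{thm:boolean second row} and then exploit the case structure of Definition~\ref{defn:canonical word} (a singleton run forces $i+1$ out of the support, an increasing run starting at $i$ must absorb $i+1$). The only difference is packaging --- the paper argues directly from $i,i+1\in\Rowtwo{w}$ that $i+2\notin\Rowtwo{w}$ by splitting into the singleton and increasing cases, whereas you assume all three and use disjointness of the runs to collapse to the singleton case before deriving the contradiction.
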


\begin{proof}
Let $w$ be a boolean permutation with $i,i+1\in \Rowtwo{w}$. Thus the canonical reduced word for $w$ has one run, call it $r_{i-1}$, with leftmost element $i-1$ and another run, call it $r_{i}$, with leftmost element $i$. 
In particular, $r_{i}$ is either a singleton or an increasing run.

Recall the algorithm used to construct $\canonicalword{w}$, given in Definition~\ref{defn:canonical word}. If $r_{i}$ is an increasing run, then $i+1$ must be part of $r_{i}$. On the other hand, if $r_{i}$ is a singleton, then $i+1$ does not appear in any reduced word for $w$. In either case, it follows from Theorem~\ref{thm:boolean second row} that $i+2\notin \Rowtwo{w}$.
 \end{proof}

\section{Characterizing boolean insertion tableaux}
\label{sec:Characterizing boolean insertion tableaux}

As we have observed, the insertion tableau of a boolean permutation has at most two rows. On the contrary, not every $2$-row standard tableau is the insertion tableau of some boolean permutation. For example, as a consequence of Corollary~\ref{cor:three in a row can't be in row 2}, the following tableau is not the insertion tableau of any boolean permutation.
\[\young(1235,4678)
\]
In this section, we will characterize the 2-row standard tableaux that are insertion tableaux for boolean permutations. We will rely heavily on the definition of the canonical reduced word from Section~\ref{subsection:canonical word}.

\begin{definition}\label{defn:balanced} 
Let $L$ be a set of integers. If, for all integers $x$ and $y$, with $x>0$, we have 
$$|[y,y+2x] \cap L| \le x+1,$$ 
then we will say that $L$ is \emph{uncrowded}.
Otherwise, we say that $L$ is \emph{crowded}.
\end{definition}

In other words, 
a set of integers $L$ is {crowded} if 
$L$ contains more than $x+1$ of the integers in some interval of $2x+1$ integers. 
We are interested in crowded and uncrowded sets as they pertain to standard tableaux. The following technical lemma is important for Theorem \ref{thm:run lead rules for uncrowded tableaux} and Definition \ref{def:uncrowded tableau}.

\begin{lemma}\label{lem:uncrowded second rows}
Let $T$ be a standard tableau with at most two rows, and let $R\subseteq \RowTwo{(T)}$. Then $R\cup\{1\}$ is uncrowded if and only if  $R$ is uncrowded. In particular, $\RowTwo{(T)}\cup \{1\}$ is uncrowded if and only if $\RowTwo{(T)}$ is uncrowded.
\end{lemma}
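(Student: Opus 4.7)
The plan is to split the biconditional and argue each direction separately. The forward implication --- $R \cup \{1\}$ uncrowded $\Rightarrow$ $R$ uncrowded --- is immediate from Definition~\ref{defn:balanced}: every subset of an uncrowded set is uncrowded, since for $L' \subseteq L$ and any interval $I = [y, y+2x]$ we have $|I \cap L'| \le |I \cap L| \le x+1$. So the substance lies in the reverse direction.

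For the converse, my approach is to extract the structural information that the tableau hypothesis provides, then argue by contradiction. First, in any standard tableau $T$ the value $1$ occupies the top-left cell, so $1 \notin \RowTwo(T)$ and hence $1 \notin R$. Next, writing $R = \{a_1 < a_2 < \cdots < a_k\}$, each $a_i$ sits in row 2 of $T$ at some column $j_i \ge i$; then the $j_i$ cells of row 1 above columns $1,\dots,j_i$ together with the $j_i - 1$ cells of row 2 strictly left of column $j_i$ furnish $2j_i - 1$ distinct positive integers strictly less than $a_i$, yielding
\[
a_i \ge 2 j_i \ge 2i.
\]
This inequality is the standardness input that will drive the contradiction.

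The main step is then the contradiction itself. Suppose $R$ is uncrowded but $R \cup \{1\}$ is not, so some interval $[y, y+2x]$ with $x \ge 1$ contains at least $x+2$ elements of $R \cup \{1\}$. Because $R$ itself is uncrowded, the interval must contain $1$ (else $|[y,y+2x] \cap R| \ge x+2$, a contradiction), which forces $y \le 1$; and since $1 \notin R$, the count $|[y,y+2x] \cap R|$ equals $x+1$ exactly. Enumerating these $x+1$ elements as $a_{i_1} < a_{i_2} < \cdots < a_{i_{x+1}}$, the distinctness of the positive integers $i_1 < \cdots < i_{x+1}$ gives $i_{x+1} \ge x+1$, so the standardness bound yields $a_{i_{x+1}} \ge 2(x+1) = 2x + 2$. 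On the other hand $a_{i_{x+1}} \le y + 2x \le 1 + 2x$, contradicting $a_{i_{x+1}} \ge 2x+2$.

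The main conceptual obstacle is recognizing that the hypothesis $R \subseteq \RowTwo(T)$ is genuinely required, and pinpointing $a_i \ge 2i$ as the precise way to use it. Without this hypothesis the claim is false: for instance $R = \{2, 3\}$ is uncrowded but $\{1, 2, 3\}$ is crowded via the interval $[1, 3]$ with $x = 1$. Such an $R$ cannot actually be contained in any $\RowTwo(T)$ for a standard $T$ precisely because the standardness bound forces $a_2 \ge 4$, which is exactly the mechanism the proof exploits.
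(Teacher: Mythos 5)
Your proof is correct, and it reaches the contradiction by a noticeably more direct mechanism than the paper's. Both arguments handle the easy direction identically (subsets of uncrowded sets are uncrowded) and both prove the converse by contradiction using standardness of $T$, but they diverge in how standardness enters. The paper first uses minimality of the crowding window to pin down the intersection exactly, showing $\{2,4,\ldots,2x,2x+1\}\subseteq R$, and only then invokes a counting argument on $\Rowone(T)$ to reach a contradiction; this requires choosing a minimal $x$ and some delicate bookkeeping of how many elements lie in each subinterval. You instead isolate the single inequality $a_i \ge 2j_i \ge 2i$ (the $i$-th smallest entry of $R$, sitting in column $j_i\ge i$ of the second row, dominates the $2j_i-1$ smaller entries weakly northwest of it), and then any crowding interval $[y,y+2x]$ containing $1$ immediately forces $x+1$ elements of $R$ into $[2,\,2x+1]$ while the largest of them must be at least $2x+2$. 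This buys you a shorter argument that needs neither the minimality of $x$ nor the exact structure of $R\cap[1,2x+1]$, and it makes explicit the standardness fact that the paper uses only implicitly in its final counting step; your closing example $R=\{2,3\}$ also cleanly certifies that the hypothesis $R\subseteq\RowTwo(T)$ cannot be dropped.
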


\begin{proof}
One direction of this statement is clear since every subset of an uncrowded set is uncrowded. To prove the other direction, assume for the sake of contradiction that $R$ is uncrowded but $R\cup\{1\}$ is crowded. This means there is some minimal $x>0$ such that $|[1,2x+1]\cap (R\cup\{1\})|>x+1$.

Therefore, there are at least $x+2$ elements in $R$ from the set
\[
\{1,2,\dots, 2x-1, 2x, 2x+1\}.
\]
Furthermore, since $x$ is minimal, there are at most $x$ elements in $R$ from the set
\[
\{1,2,\dots, 2x-1\}. 
\]  
Hence  
$\{2x,2x+1\}\subseteq R$ and
there are exactly $x$ elements in $R$ from the set $\{1,2,\dots, 2x-1\}$. 
Since $R$ is uncrowded, $\{1,2,\dots, 2x-1\} \cap R  = \{2,4,\dots,2x-2\}$.
It follows that $\{2,4,\ldots, 2x, 2x+1\}\subseteq R$.

This is a set of size $x+1$, and there are only $x$ positive integers smaller than $2x+1$. However, $\Rowone{(T)}$ requires at least $x+1$ such numbers, so this is a contradiction.  
\end{proof}

With this lemma in hand, we can prove the main result in this section which will relate uncrowded sets to boolean permutations.

\begin{theorem}\label{thm:run lead rules for balanced tableaux}
\label{thm:run lead rules for uncrowded tableaux}
Let $L$ be a subset of $\{1, \ldots, n-1\}$. Then $L \cup \{0\}$ is uncrowded if and only if $L$ is the set of leftmost letters in the runs of the canonical reduced word of a boolean permutation.
\end{theorem}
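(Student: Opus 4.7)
The plan is to prove both directions of the equivalence separately, exploiting the peeling structure of Definition~\ref{defn:canonical word:heap}. I will write $\canonicalword{w} = r_1 \cdots r_k$, where $r_j$ covers the integer interval $I_j = [a_j, b_j]$ in $\supp(w)$ (so that $a_1 < a_2 < \cdots < a_k$), and $\ell_j$ denotes the leftmost letter of $r_j$; thus $\ell_j = a_j$ when $r_j$ is a singleton or increasing, and $\ell_j = b_j$ when $r_j$ is decreasing. Adopting the convention $\ell_0 := 0$, the condition that $L \cup \{0\}$ be uncrowded is equivalent to the inequality $\ell_{j+m} - \ell_j \ge 2m - 1$ for every $j \ge 0$ and $m \ge 1$ with $j + m \le k$.

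For the forward implication, the key observation is that $a_{j+1} \ge a_j + 2$ for each $j$: either $r_j$ has length at least two (so $a_{j+1} > b_j \ge a_j + 1$), or $r_j$ is a singleton, in which case the case~(a) condition forces $a_j + 1 \notin \supp(w)$ and hence $a_{j+1} \ne a_j + 1$. Iterating yields $a_j \ge 2j - 1$, which handles the anchor case $j = 0$ since $\ell_j \ge a_j$. For $j \ge 1$ I split on whether $r_j$ is decreasing: if not, then $\ell_j = a_j$ and $\ell_{j+m} \ge a_{j+m} \ge a_j + 2m = \ell_j + 2m$; if so, then $\ell_j = b_j$ satisfies $a_{j+1} \ge b_j + 1 = \ell_j + 1$, and combining this with $a_{j+m} \ge a_{j+1} + 2(m-1)$ yields $\ell_{j+m} \ge \ell_j + 2m - 1$. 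Either way the required inequality holds.

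For the reverse implication, I will construct a fence (and hence a boolean permutation) directly from $L$. Processing $L$ from right to left, I set $I_k := \{\ell_k\}$, and for $j = k - 1, \ldots, 1$ I set $I_j := [\ell_j - 1,\ell_j]$ whenever $a_{j+1} = \ell_j + 1$ and $I_j := \{\ell_j\}$ otherwise. I then define a fence on $\supp(w) := \bigsqcup_j I_j$ by imposing $\ell_j - 1 \succ \ell_j$ inside each length-two interval $I_j$, and $\ell_j \prec \ell_j + 1$ whenever a decreasing $I_j$ is adjacent to the next interval $I_{j+1}$. A direct simulation of the algorithm of Definition~\ref{defn:canonical word:heap} on this fence then shows that each peeling step produces exactly the prescribed run with leftmost letter $\ell_j$, so that the canonical word of the resulting boolean permutation has leftmost-letter set exactly $L$.

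The main obstacle is to confirm that this construction is legitimate, namely that $a_1 \ge 1$ so that all intervals lie in $\{1,\ldots,n-1\}$. The run $r_1$ is forced to be decreasing only through a \emph{cascade}: there must exist $m \in [1,k-1]$ with $\ell_{i+1} - \ell_i = 2$ for $1 \le i \le m - 1$ and $\ell_{m+1} - \ell_m = 1$, giving $\ell_{m+1} = \ell_1 + 2m - 1$. The uncrowded hypothesis applied to $\ell_{m+1}$ then forces $\ell_{m+1} \ge 2(m+1) - 1 = 2m + 1$, and so $\ell_1 \ge 2$ and $a_1 = \ell_1 - 1 \ge 1$. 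This cascade analysis is where the uncrowded condition on $L \cup \{0\}$ is used in an essential, non-local way, and I expect it to be the main technical point of the proof.
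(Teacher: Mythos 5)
Your forward direction is correct and complete, and it is genuinely different from (and arguably cleaner than) the paper's argument: the paper rules out crowded sets by a case analysis on a minimal crowded witness, invoking Theorem~\ref{thm:boolean second row}, Corollary~\ref{cor:three in a row can't be in row 2}, and Lemma~\ref{lem:uncrowded second rows}, whereas you derive the spacing inequality $\ell_{j+m}-\ell_j\ge 2m-1$ directly from the peeling structure via $a_{j+1}\ge a_j+2$. That reformulation of uncrowdedness is also correct. Your reverse direction likewise departs from the paper, which proceeds by induction on $|L|$ (shifting past the smallest $j$ with $m_j>2j+1$ and handling the all-odd case with two-element increasing runs); your explicit fence built from singletons and two-element decreasing runs is a legitimate alternative, and the cascade argument correctly establishes $\ell_1\ge 2$ and hence $a_1\ge 1$.

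There is, however, a genuine gap in the reverse direction: you never verify that the intervals $I_1,\dots,I_k$ are pairwise disjoint, which your construction presupposes when you write $\supp(w)=\bigsqcup_j I_j$ and when you make $\ell_j$ the top of the decreasing run on $I_j$. The problem is exactly parallel to the $a_1\ge 1$ issue but occurs at interior indices: if $\ell_{j+1}=\ell_j+1$ and $I_{j+1}$ is forced to be the two-element interval $[\ell_j,\ell_j+1]$ (which happens precisely when a cascade starts at index $j+1$), then $\ell_j\in I_j\cap I_{j+1}$. For instance, $L=\{2,5,6,7\}$ satisfies every anchored inequality $\ell_m\ge 2m-1$ that your legitimacy check actually uses, yet your algorithm produces $I_2=\{5\}$ and $I_3=[5,6]$, which collide; this $L$ is of course crowded, but only an instance of uncrowdedness with $j\ge 1$ detects it. The fix is the same cascade computation run from a general index: a cascade starting at $j+1$ and terminating at a singleton $I_{m+1}$ forces $\ell_{m+1}=\ell_j+2(m-j)$, so the $m-j+2$ elements $\ell_j,\dots,\ell_{m+1}$ lie in an interval of $2(m-j)+1$ integers, contradicting the hypothesis. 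Once you add this verification, the argument is complete.
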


\begin{proof}
First note that the result is easily checked when $|L|$ is small.

Let us now prove the direction that if $L \cup \{0\}$ is a crowded set then  $L$ cannot be the set of leftmost run letters in the canonical reduced word of a boolean permutation.
Suppose that $L \cup \{0\}$ is crowded, and let us find a minimally wide set demonstrating this crowding: fix a pair of values $y\in L\cup \{0\}$  and $x>0$ such that (1) $|[y,y+2x] \cap (L \cup \{0\})| > x+1$ and (2) $x$ is minimal for this and all other possible values of $y$.

The second condition means no proper subset of $[y,y+2x] \cap (L \cup \{0\})$ is crowded. This implies that one of the following cases holds:
\begin{enumerate}[(i)]
\item \label{itm:proof:uncrowded set and boolean:i}
$\{1,3,5,\ldots, 2x-1,2x\} \subseteq L$,
\item \label{itm:proof:uncrowded set and boolean:ii}
$\{y,y+1,y+2\}\subseteq L$, or
\item \label{itm:proof:uncrowded set and boolean:iii}
$\{y,y+1,y+3,y+5,\ldots,y+2x-1,y+2x\}\subseteq L$.
\end{enumerate}
We will now prove in all cases that these cannot be leftmost run letters in the canonical word for a boolean permutation.
 
First, assume that Case~\eqref{itm:proof:uncrowded set and boolean:i} holds.  
Suppose by contradiction that $1,3,5,\ldots, 2x-1,2x$ are leftmost elements in the runs of the canonical reduced word $\canonicalword{b}$ of some boolean permutation $b$. 
Then Theorem~\ref{thm:boolean second row} tells us that  $\Rowtwo{b}\supseteq\{2,4,6,\dots,2x,2x+1\}$. However, $\{2,4,6,\dots,2x,2x+1\}$ is uncrowded and $\{1,2,4,6,\dots,2x,2x+1\}$ is crowded. 
By Lemma \ref{lem:uncrowded second rows}, we conclude this case is impossible.

Case~\eqref{itm:proof:uncrowded set and boolean:ii} is Corollary~\ref{cor:three in a row can't be in row 2}. 

Next, assume that  Case~\eqref{itm:proof:uncrowded set and boolean:iii} holds.
 Suppose the elements of the set $\{y,y+1,y+3,\dots,y+2x-1\}$ are leftmost letters in  the canonical reduced word $\canonicalword{b}$ of a boolean permutation $b$. 
 Then $\canonicalword{b}$ must include the run
 \[
 y(y-1)\cdots,
 \]
which forces those remaining leftmost letters to come from the runs:
\begin{align*}
&(y+1){\color{red}(y+2)}\\
&(y+3){\color{red}(y+4)}\\
    & \ \ \vdots\\
    & (y+2x-1){\color{red}(y+2x)},
\end{align*}
where the rightmost values are either in those runs or are not in the support of $b$. 
In either case, it would be impossible for $\textcolor{red}{y+2x}$ to be a leftmost run letter in $\canonicalword{b}$. 

Thus, if $L \cup \{0\}$ is crowded, then $L$ cannot be the set of leftmost run letters of the canonical reduced word of a boolean permutation.

We now show that, whenever $L\cup \{0\}$ is uncrowded, there is a canonical word for some boolean permutation whose leftmost run letters are exactly the elements of $L$. 
For $L=\emptyset$, the empty word for the identity permutation satisfies these conditions. Now say $L\cup \{0\}$ is uncrowded, and assume, inductively, that our result holds for all $L'$ with $|L'|<|L|$. Write $L = \{m_0<\cdots < m_k\}$, and note that, for all $i$, we have $m_i\geq 2i+1$. Indeed, if there is an $i>0$  such that $m_i\leq 2i$, then $L\cup \{0\}$ is crowded since $$|[0,2i]\cap (L\cup \{0\})| =|\{0, m_0, \ldots, m_i\}|=i+2>i+1.$$

First, consider the case that $m_i = 2i+1$ for all $i$. Then $L = \{1,3,\ldots, 2k+1\}$, and the following word -- comprised of  two-element increasing runs -- is the canonical word for a boolean permutation with leftmost run letters exactly the elements of $L$: $[(2k+1)(2k+2)\cdot \cdots \cdot 34\cdot 12].$

Otherwise, choose the smallest $j$ such that $m_j>2j+1$, and define $$L' \coloneqq \{z-m_j: z\in L \textup{ and } z>m_j\}.$$ Note that $L'\cup \{0\} \subseteq \{z-m_j:z\in L\cup \{0\}\}$. The latter set is uncrowded since it is a shift of the uncrowded set $L\cup \{0\}$. This means $L'\cup \{0\}$ is a subset of an uncrowded set and is therefore also uncrowded. 
By the inductive hypothesis, there exists a canonical word $[s]$ for a boolean permutation with leftmost run letters exactly the elements of $L'$.

Define $[t]$ to be the word obtained by adding $m_j$ to each letter in $[s]$. Since $[t]$ is simply a shift of $[s]$, we see that $[t]$ is a canonical word for a boolean permutation with leftmost run letters $L\setminus\{m_0,\ldots, m_j\}=L\setminus\{1,3,\ldots, 2j-1,m_j\}$. By construction, all letters of $[t]$ are larger than $m_j$. Since $m_j>2j+1$, we also have $m_j-1>2j$. Thus, we conclude that the following word is a canonical word for a boolean permutation with leftmost run letters given by $L$:
$$[m_j(m_j-1)\cdot t \cdot (2j-1)(2j)\cdot \cdots \cdot 34\cdot 12].$$
\end{proof}

Combining Theorem~\ref{thm:run lead rules for uncrowded tableaux} and Theorem~\ref{thm:boolean second row}, we can characterize the sets $\Rowtwo{w}$ when $w$ is boolean.

\begin{corollary}\label{cor:second row rules for balanced tableaux}
Let $X$ be a subset of $\{2, \ldots, n\}$, and set $L := \{x-1 : x \in X\}$. The set $X$ is equal to $\Rowtwo{w}$ for some boolean permutation $w \in S_n$ if and only if $L \cup \{0\}$ is uncrowded.
\end{corollary}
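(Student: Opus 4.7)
The plan is to deduce this corollary directly by chaining together Theorem~\ref{thm:boolean second row} with Theorem~\ref{thm:run lead rules for uncrowded tableaux}. The map $\ell \mapsto \ell + 1$ is a bijection between subsets of $\{1, \ldots, n-1\}$ and subsets of $\{2, \ldots, n\}$, sending $L$ to $X$. Under this bijection, Theorem~\ref{thm:boolean second row} translates the statement ``$X = \Rowtwo{w}$ for some boolean $w$'' into the statement ``$L$ is precisely the set of leftmost letters in the runs of $\canonicalword{w}$ for some boolean $w$.'' Theorem~\ref{thm:run lead rules for uncrowded tableaux} then equates that statement with ``$L \cup \{0\}$ is uncrowded.'' Chaining the two equivalences yields the corollary.

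The one subtlety to verify is the requirement that the boolean permutation lives in $S_n$ specifically, rather than in some larger symmetric group. The forward direction of this is immediate: if $w \in S_n$ is boolean with $X = \Rowtwo{w}$, then $\supp(w) \subseteq \{1, \ldots, n-1\}$, so the leftmost letter set $L$ automatically lies in $\{1, \ldots, n-1\}$.

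The backward direction is what I expect to be the main obstacle and so warrants a careful look: starting from $L \subseteq \{1, \ldots, n-1\}$ with $L \cup \{0\}$ uncrowded, we need to exhibit a boolean $w \in S_n$ (not merely in some $S_m$ with $m > n$) whose canonical word has leftmost letter set $L$. This can be achieved by adapting the inductive construction in the proof of Theorem~\ref{thm:run lead rules for uncrowded tableaux} so that the canonical word uses only letters in $\{1, \ldots, n-1\}$. In particular, whenever the construction would extend an increasing run at the top of the support to include the forbidden letter $n$, one instead realizes that leftmost letter as a singleton (or as the leftmost of a decreasing run). The hypothesis $\max L \leq n-1$ guarantees this alternative is always available, producing a boolean permutation whose support is contained in $\{1, \ldots, n-1\}$ and which therefore lies in $S_n$ with $\Rowtwo{w} = X$, as required.
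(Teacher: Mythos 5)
Your proof is correct and follows the same route as the paper, which obtains this corollary by simply combining Theorem~\ref{thm:boolean second row} with Theorem~\ref{thm:run lead rules for uncrowded tableaux} under the shift $\ell \mapsto \ell+1$. The extra point you raise about keeping $w$ inside $S_n$ is a legitimate refinement rather than a detour: the explicit construction in the proof of Theorem~\ref{thm:run lead rules for uncrowded tableaux} can indeed use the letter $n$ (for instance, $L=\{1,3\}$ with $n=4$ produces the word $[34\cdot 12]$, whose permutation lies in $S_5$ but not $S_4$), and your replacement of the offending increasing run by a singleton (here $[3\cdot 1]$, giving $2143\in S_4$) repairs this while preserving the set of leftmost run letters. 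The paper leaves this check implicit.
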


This result together with Lemma \ref{lem:uncrowded second rows} motivates us to define a special class of tableaux.

\begin{definition}\label{def:uncrowded tableau}
Let $T$ be a standard tableau having at most two rows. We say that $T$ is \emph{uncrowded} if the set $\RowTwo(T)$ is uncrowded.
\end{definition}

Note that a $1$-row tableau, for which $\RowTwo(T) = \emptyset$, is always uncrowded. We can rephrase Definition \ref{def:uncrowded tableau} to say a tableau is uncrowded if and only if it is the insertion tableau of a boolean permutation. Note that this does not mean that an uncrowded tableau can \emph{only} occur as an insertion tableau of a boolean permutation. Because the RSK insertion algorithm is a bijection between permutations and \emph{pairs} of standard tableaux, multiple permutations can have the same insertion tableau. For example, both $3142$ (boolean) and $3412$ (not boolean) have the following insertion tableau which is uncrowded because  $\{3,4\}$ is uncrowded.
\[\begin{ytableau}
1 & 2\\
3 & 4
\end{ytableau}\]

The inverse of a boolean permutation is also boolean, because reversing a reduced word will not introduce any repetition among its letters. Therefore, due to Proposition~\ref{prop:P w inverse is Q w}, there is a statement about recording tableaux that is analogous to the insertion tableau result given in Corollary~\ref{cor:second row rules for balanced tableaux}.

\begin{corollary}\label{cor:second row rule for recording tableaux}
Let $Q$ be a standard tableau with at most two rows. Then $Q$ is the RSK recording tableau of some boolean permutation if and only if $\RowTwo(Q)$ is uncrowded. 
\end{corollary}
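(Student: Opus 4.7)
The plan is to deduce Corollary~\ref{cor:second row rule for recording tableaux} from its insertion-tableau counterpart, Corollary~\ref{cor:second row rules for balanced tableaux}, by exploiting the RSK symmetry $\P(w^{-1}) = \Q(w)$ (Proposition~\ref{prop:P w inverse is Q w}) together with the fact that boolean permutations are closed under inversion.

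First I would establish the inversion-closure: $w \in S_n$ is boolean if and only if $w^{-1}$ is. By Proposition~\ref{prop:tfae boolean}, being boolean is equivalent to having some (equivalently, every) reduced word with pairwise distinct letters. Given any reduced word $\rw{i_1 \cdots i_\ell} \in R(w)$, its reversal $\rw{i_\ell \cdots i_1}$ is a reduced word of $w^{-1}$, and this reversal clearly preserves the distinct-letter property. Hence the property transfers between $w$ and $w^{-1}$.

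Combining this with Proposition~\ref{prop:P w inverse is Q w}, a standard tableau $Q$ of size $n$ satisfies $Q = \Q(w)$ for some boolean $w \in S_n$ precisely when $Q = \P(v)$ for some boolean $v \in S_n$ (take $v = w^{-1}$). It then suffices to apply Corollary~\ref{cor:second row rules for balanced tableaux} to the set $X := \RowTwo(Q) \subseteq \{2,\ldots,n\}$: setting $L := \{x-1 : x \in X\}$, the corollary states that $X = \Rowtwo{v}$ for some boolean $v$ if and only if $L \cup \{0\}$ is uncrowded. Because the defining condition $|[y, y+2x] \cap L| \le x+1$ is invariant under integer translation of $y$, the set $L \cup \{0\}$ is uncrowded if and only if $X \cup \{1\}$ is uncrowded; then Lemma~\ref{lem:uncrowded second rows}, applied to the tableau $Q$, gives that $X \cup \{1\}$ is uncrowded if and only if $\RowTwo(Q) = X$ is uncrowded. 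Chaining these equivalences yields the desired characterization.

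I do not foresee a genuine obstacle, since each link in the chain is an immediate consequence of a result already established in the paper; the only minor point requiring attention is the translation-invariance step converting $L \cup \{0\}$ to $X \cup \{1\}$, which follows directly from the definition of uncrowded.
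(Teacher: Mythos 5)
Your proposal is correct and follows essentially the same route as the paper: closure of boolean permutations under inversion (via reversal of reduced words), the symmetry $\P(w^{-1})=\Q(w)$, and then the insertion-tableau characterization of Corollary~\ref{cor:second row rules for balanced tableaux} combined with Lemma~\ref{lem:uncrowded second rows}. You are in fact slightly more explicit than the paper about the translation-invariance step relating $L\cup\{0\}$ to $X\cup\{1\}$ and about the (implicit) fact that a two-row standard tableau is determined by its second row, but these are the same ingredients the paper uses.
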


In other words, both the insertion and recording tableaux of boolean permutations follow the same characterization: their second rows are uncrowded.
However the converse is not true. For example, $3412$ is not a boolean permutation but has uncrowded insertion and recording tableaux.

\section{Enumerating uncrowded tableaux}
\label{sec:counting uncrowded tableaux}

In this section, we enumerate uncrowded tableaux via a bijection to a certain set of binary words. A \emph{maximal run} in a word is a factor of maximally many identical symbols, and it is common to use \emph{run-length} to describe the length of a maximal run. 
For example, the binary word $0111001$ has run-lengths $1$, $3$, $2$, and $1$, when read from left to right. 

Let $U_n$ be the set of standard uncrowded tableaux of size $n$ 
(including the $1$-row tableau), and let $X_n$ be the set of $01$-words of length $n-1$ in which all run-lengths of $1$s are odd. 
We define a map $f:X_n \rightarrow U_n$ as follows. 
(See Example \ref{ex:illustrating the map f}.)
If $x$ is the word whose letters are all $0$s, then let \[f(x)=
\raisebox{-.05in}{
\begin{ytableau}
1&2&\cdots &n
\end{ytableau}}.\] 
Otherwise, given $x = x_1x_2\cdots x_{n-1} \in X_n$, we will construct a $2$-row tableau $f(x)$. Define $\alpha(x) \subset [1,n]$ 
and 
 $\beta(x) \subset [2,n]$ 
 as follows:
\begin{itemize}
\item $1 \in \alpha(x)$.
\item If $x_i = 0$ then $n+1-i \in \alpha(x)$.
\item If we have a maximal run of $1$s starting at index $i$ and ending at index $i+2k$, then
\begin{itemize}
\item[$\star$] $n+1-i \in \beta(x)$, 
\item[$\star$] if $2k>0$, then $n+1-j \in \beta(x)$ for all $j \in \{i+1,i+3,i+5,\ldots, i+2k-1\}$, and
\item[$\star$] if $2k>0$, then $n+1-j \in \alpha(x)$ for all $j \in \{i+2,i+4,i+6,\ldots, i+2k\}$.
\end{itemize}
\end{itemize}
Let $f(x)$ be the tableau whose first (resp., second) row is the increasing sequence of entries in $\alpha(x)$ (resp., $\beta(x)$).

We first verify $f$ is a well-defined map from $X_n$ to $U_n$.

\begin{lemma}
For each $x\in X_n$, we have $f(x) \in U_n$.
\end{lemma}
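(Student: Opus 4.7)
My plan is to verify two things about $f(x)$: first, that it is a legitimate standard Young tableau with entries $\{1, \ldots, n\}$; and second, that its second row $\beta(x)$ is uncrowded in the sense of Definition~\ref{defn:balanced}.

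For the tableau structure, I will first check that $\alpha(x)$ and $\beta(x)$ partition $\{1, \ldots, n\}$ via the case analysis in the construction: the value $1$ is placed in $\alpha(x)$, and each index $i \in \{1, \ldots, n-1\}$ contributes the value $n+1-i$ to exactly one of $\alpha(x)$ or $\beta(x)$, depending on $x_i$ and, when $x_i = 1$, on the offset of $i$ within its maximal $1$-run. Rows are increasing by construction. For column-strictness, I will encode the condition as a lattice-walk problem via $d(k) := |\alpha(x) \cap [1,k]| - |\beta(x) \cap [1,k]|$; columns strictly increase iff $d(k) \geq 0$ for every $k$. As $k$ runs from $2$ to $n$, we traverse $x$ from right to left; within each maximal $1$-run of length $2\kappa+1$ the steps of $d$ follow the pattern $+1, -1, +1, -1, \ldots, +1, -1, -1$ (net $-1$, minimum partial sum $-1$ at the end of the run). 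Since the walk starts at $d(1) = 1$ and each maximal $1$-run is preceded, in our scan direction, by at least one $0$ contributing $+1$, the walk stays non-negative throughout.

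For uncrowdedness of $\beta(x)$, I will exploit the following structural observations. Each maximal $1$-run of length $2\kappa+1$ in $x$ contributes the set $R = \{m-2\kappa+1, m-2\kappa+3, \ldots, m-1, m\}$ of size $\kappa+1$ to $\beta(x)$: an arithmetic progression of common difference $2$ capped by a close pair $\{m-1,m\}$ at the top. Two distinct such contributions $R$ and $R'$ satisfy $\min R - \max R' \geq 3$, because the underlying $1$-runs in $x$ are separated by at least one $0$. Given an interval $I = [y, y+2z]$ with $z \geq 1$ and writing $b_1 < \cdots < b_B$ for $I \cap \beta(x)$, I will establish the bound $b_B - b_1 \geq 2B - 3$. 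In the single-run case this follows from the explicit structure of $R$ (the tight configuration uses the close pair at the top). In the multi-run case it follows by telescoping $b_B - b_1 = (b_B - \min R_{i_1}) + (\min R_{i_1} - \max R_{i_2}) + (\max R_{i_2} - b_1)$, using that each interior contribution $R_j$ sitting fully inside $I$ spans at least $2n_j - 3$ integers, each inter-run gap contributes at least $3$ integers of span, and the top and bottom partial contributions account for at least $2a-3$ and $2b-3$ respectively (with $a + b + \sum n_j = B$). Then $|I| \geq b_B - b_1 + 1 \geq 2B - 2$; since $|I| = 2z+1$ is odd, in fact $|I| \geq 2B - 1$, which rearranges to $B \leq z + 1$, as required.

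The main obstacle is the uncrowdedness estimate. The naive bound $b_B - b_1 \geq 2(B-1)$ is false: the close pair at the top of each $R$ saves one unit of span, forcing the tight inequality $b_B - b_1 \geq 2B - 3$. Consequently the parity of $|I|$ (it is odd) is essential in upgrading $|I| \geq 2B - 2$ to $|I| \geq 2B - 1$, which is the step that finally delivers the desired $B \leq z+1$. Carefully setting up the accounting across the top run, interior runs, and bottom run — and correctly telescoping $\min R_j - \max R_{j+1} \geq 3$ — will be the most delicate part of the proof.
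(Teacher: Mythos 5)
Your proof is correct, and both halves take a genuinely different route from the paper's. For standardness, the paper exhibits an explicit injection sending each second-row entry to a smaller first-row entry (matching each $\beta$-element produced by a maximal $1$-run to an $\alpha$-element produced by the adjacent $0$, or to the entry $1$ when the run abuts the right end of the word); you instead verify the ballot condition $d(k)=|\alpha(x)\cap[1,k]|-|\beta(x)\cap[1,k]|\ge 0$ by a lattice-walk analysis, which is equivalent but makes the bookkeeping (each $1$-run costs a net $-1$, paid for by the preceding $0$ or by the initial step $d(1)=1$) quite transparent. One small wording slip: the rightmost maximal $1$-run need not be preceded by a $0$ in your scan direction when it ends at index $n-1$, but your invocation of $d(1)=1$ covers exactly that case, so there is no gap. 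For uncrowdedness, the paper argues directly on index-windows of length $2z+1$, asserting that the odd-run-length condition permits at most $z+1$ indices to contribute to $\beta(x)$ — a rather terse count; your argument instead bounds the span $b_B-b_1\ge 2B-3$ by adding per-run spans ($2c-3$ when the top pair $\{m-1,m\}$ is captured, $2c-2$ otherwise) to inter-run gaps of at least $3$, and then uses the parity of $|I|=2z+1$ to upgrade $|I|\ge 2B-2$ to $|I|\ge 2B-1$. This is more work but also more airtight: it isolates precisely where the "close pair at the top of each run" threatens the bound and why the odd window length rescues it, whereas the paper leaves that accounting implicit. Both proofs rest on the same structural facts (the explicit form of each run's contribution to $\alpha(x)$ and $\beta(x)$, and the separating $0$ between maximal $1$-runs), so what your version buys is rigor and a reusable quantitative estimate on $\beta(x)$, at the cost of a longer case analysis.
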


\begin{proof}
We must establish two facts: $\beta(x)$ is uncrowded, and $f(x)$ is a standard tableau.

The requirement that each run-length of $1$s is odd means that, for each even length interval of integers $[i,i+2z]$, we can have at most 
\[n+1-i \text{ and } \{n+1-j \mid j \in \{i+1,i+3,\dots, i+2z-1\} \},\quad \text{or}
\]
\[
\{n+1-j \mid j \in \{i,i+2,\dots, i+2z\} \}
\]
in $\beta(x)$. Therefore we can have at most $z+1$ integers in $\beta(x)$ from $[i,i+2z]$.
Therefore, $\beta(x)$ is uncrowded.

To see that $\alpha(x)$ and $\beta(x)$ form the first and second rows, respectively, of a standard tableau, we observe that the following is an injective map sending an entry in the second row to a smaller entry in the first row: 
For a maximally long factor $x_{i} \cdots x_{i+2k}$ of $1$s in $x$, consider the entries of the interval $[n+1-(i+2k-1),n+1-i]$ in $\beta(x)$.

\begin{itemize}
\item 
If $2k=0$, then the corresponding singleton set in $\beta$ is $\{ n+1-i \}$. 
Either $i=n-1$ or $x_{i+1}=0$. If $i=n-1$, then $n+1-i=n+1-(n-1)=2$ and we map $2\in \beta(x)$ to $1 \in \alpha(x)$. 
If $x_{i+1}=0$, then $n+1-(i+1)=n-i \in \alpha(x)$, and we map $n+1-i \in \beta(x)$ to $n-i \in \alpha(x)$.

\item 
If $2k \geq 2$, then we have
\[\{n+1-i \} \cup \{ n+1-j \mid j =i+1,i+3,\ldots, i+2k-1\} \subseteq \beta(x)\]
and
\[\{ n+1-j \mid j = i+2,i+4,\ldots, i+2k\} \subseteq \alpha(x).\]
Either $i+2k=n-1$ or $x_{i+2k+1}=0$. 
If $i+2k=n-1$, then we map $n+1-(2k-1)\in \beta(x)$ to $1\in \alpha(x)$. 
Otherwise, the fact that $x_{i+2k+1}=0$ gives us an entry in $\alpha(x)$ that is smaller than everything in $\{ n+1-i\} \cup \{ n+1-j \mid j =i+1,i+3,\ldots, i+2k-1\} \subseteq \beta(x)$. 
This allows us to send each element in $\beta(x)$ to a unique smaller entry in $\alpha(x)$.
\end{itemize}
\noindent Therefore each $f(x)$ is indeed in $U_n$.
\end{proof}

In fact, not only does $f$ send elements from $X_n$ to the set $U_n$, but we can invert this process. That is, the map $f$ is a bijection.

\begin{proposition}\label{prop:f is a bijection}
The map $f: X_n \to U_n$ is a bijection.
\end{proposition}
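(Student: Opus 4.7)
The plan is to construct an explicit inverse map $g : U_n \to X_n$ and verify that $g \circ f = \mathrm{id}_{X_n}$ and $f \circ g = \mathrm{id}_{U_n}$. Given $T \in U_n$, I would first form an auxiliary binary word $b(T) = b_1 b_2 \cdots b_{n-1}$ by setting $b_i = 1$ when $n+1-i \in \RowTwo(T)$ and $b_i = 0$ otherwise. Note that $b(T)$ contains no factor $111$: three consecutive $1$s would place $\{n-i-1, n-i, n+1-i\}$ into $\RowTwo(T)$ inside the length-$3$ interval $[n-i-1, n+1-i]$, contradicting the fact that $\RowTwo(T)$ is uncrowded (applied with $x=1$).

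Next, I would define $g(T)$ by scanning $b(T)$ from left to right and decomposing it into \emph{blocks}: at each occurrence of a factor $b_i b_{i+1} = 11$, take the maximal $k \geq 1$ so that $b_i b_{i+1} \cdots b_{i+2k}$ equals $11(01)^{k-1}0$, and call this factor a block of length $2k+1$; any remaining isolated $1$ in $b(T)$ is a block of length $1$. Set $g(T) = x$, where $x_j = 1$ at every position $j$ belonging to some block and $x_j = 0$ otherwise. The $1$-runs of $x$ then coincide with the blocks, all of odd length, so $g(T) \in X_n$.

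To show $g \circ f = \mathrm{id}_{X_n}$, one checks that for $x \in X_n$ each maximal $1$-run of length $2k+1$ in $x$ at positions $[i, i+2k]$ contributes to $b(f(x))$ exactly the factor $11(01)^{k-1}0$ at positions $[i, i+2k]$ (or simply $1$ when $k = 0$), so the decomposition step of $g$ returns the original runs of $x$. For $f \circ g = \mathrm{id}_{U_n}$: reinserting each block of $b(T)$ via the rule defining $f$ reproduces $\RowTwo(T)$, and a standard tableau with at most two rows is determined by which values lie in its second row, so $f(g(T)) = T$.

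The main obstacle is proving that the block decomposition of $b(T)$ is unambiguous, i.e., that the greedy extension from each occurrence of $11$ yields a factor of the form $11(01)^{k-1}0$ for a unique $k$. This is forced by the uncrowded property applied to odd-length value-intervals larger than $3$: for instance, a factor $11(01)^{k-1}11$ in $b(T)$ would place $k+3$ elements of $\RowTwo(T)$ into a value-interval of length $2k+3$, exceeding the allowed bound of $k+2$. Such constraints rule out competing $11$ factors during an extension and force the terminating $0$ at the end of each block, so the decomposition produced by $g$ matches the structure imposed by $f$.
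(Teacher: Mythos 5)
Your strategy coincides with the paper's: both proofs construct an explicit inverse $g$ by reading $\RowTwo(T)$ off as the indicator word $b(T)$ (the paper builds it value-by-value from the largest element down, which is the same left-to-right scan of positions) and then regrouping each pattern $11(01)^{k-1}0$ into a single odd-length run of $1$s. The generic steps of your argument are sound: the exclusion of $111$, the exclusion of a $1$ immediately following $11(01)^{k-1}$ (the factor $11(01)^{k-1}1$ already packs $k+2$ elements of $\RowTwo(T)$ into an odd value-interval of length $2k+1$, so your ``for instance'' with $11(01)^{k-1}11$ is a slightly weaker statement than what you need, but the same counting gives the stronger one), and the verification of the two composite identities.

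There is, however, one genuine gap, and it is exactly the point where the paper must do extra work. Your well-definedness argument for the block decomposition uses only the uncrowdedness of $\RowTwo(T)$, and that is not sufficient at the right end of the word. If the pattern $11(01)^{k-1}$ terminates at position $n-1$ --- equivalently, if $\{2,4,\dots,2k,2k+1\}\subseteq\RowTwo(T)$; for $k=1$ this is $\{2,3\}\subseteq\RowTwo(T)$ --- then there is no position available for the terminating $0$: your block is ill-formed, and any fallback grouping produces an even-length run of $1$s, so $g(T)\notin X_n$. Crucially, the set $\{2,4,\dots,2k,2k+1\}$ is itself \emph{uncrowded} (the interval $[1,2k+1]$ contains exactly $k+1$ of its elements, which meets the bound), so no interval-counting argument applied to $\RowTwo(T)$ alone can exclude this configuration. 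What rules it out is that $\RowTwo(T)\cup\{1\}$ would be crowded, which is incompatible with $T$ being a standard tableau; this is precisely Lemma~\ref{lem:uncrowded second rows}, which the paper invokes in its proof of this proposition to establish $z\ge 2k+2$. You need to add this boundary check (or cite that lemma) before your map $g$ is well defined, after which the rest of your argument goes through.
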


\begin{proof}
We describe the inverse map $g: U_n \to X_n$ of $f$. 

If $T\in U_n$ is the $1$-row tableau, then let $g(T) $ be the word whose letters are all $0$s. 

Otherwise, let $g(T)$ be the $01$-word $x_1x_2\cdots x_{n-1}$ constructed using the following algorithm. 
Set $x_i:=0$ for $1\le i \le n-1$.
Let $\beta$ denote the second row of $T$. 
Since $T$ is standard, we have $\beta\subset [2,n]$.  
\begin{itemize}
\item Let $z$ be the largest number in $\beta$. Note that $z>1$, and set $x_{n+1-z}:=1$.
\begin{itemize}
\item[$\star$] If $z-1\notin \beta$, then let $\beta':=\beta \setminus \{ z \}. $ 
\item[$\star$] 
Otherwise the uncrowded condition guarantees the existence of a maximal integer $k\geqslant 1$ such that 
$$[z-2k,z] \cap \beta = \{z,z-1,z-3,\dots,z-(2k-1)\}.$$
Set 
\[
\text{$x_j:=1$ for $n+2-z\le j \le n+(2k+1)-z$,}
\]
and let $\beta':=\beta \setminus \{z,z-1,z-3,\dots,z-(2k-1)\}$.
\end{itemize}
\item 
If $\beta'$ is empty, then we are done. Otherwise, redefine $\beta:= \beta'$ and iterate the process.
\end{itemize}

We first check that the algorithm 
is well-defined. More precisely, we need to prove that $n+(2k+1)-z$ is less than or equal to $n-1$, which is equivalent to proving that $z$ is greater than or equal to $2k+2$.
Suppose, for the sake of contradiction, that $z\le 2k+1$. Because $z-(2k-1) \in \beta$, we have $z-(2k-1)\geq 2$, and thus $z=2k+1$. 
The $k+1$ elements of $S:= \{z,z-1,z-3,\dots,z-(2k-1)\}$ are in $\RowTwo(T)$. When $z = 2k+1$, we have $S = \{2, 4, \ldots, 2k-1, 2k\}$. This means $S\subseteq\RowTwo(T)$ is uncrowded and $S\cup \{1\}$ is crowded. Lemma \ref{lem:uncrowded second rows} shows this is impossible, and we have reached a contradiction

Next we show that $x_{n+(2k+2)-z}$, if it exists, will stay equal to $0$ 
at the conclusion of each iteration.  To see this, note that  $z-(2k+1)$ is not in the second row of $T$, so the next largest element in the iteration will be less than $z-(2k+1)$.

Now it is apparent that the above algorithm indeed gives a $01$-word. 
Moreover, all run-lengths of $1$s are odd, therefore $g(T)\in X_n$. 
It is straightforward to check that $f\circ g = \mathbf{1}_{U_n}$ and $g\circ f = \mathbf{1}_{X_n}$. Therefore we obtain a bijection between $U_n$ and $X_n$.
\end{proof}

We demonstrate this bijection with an example, beginning with the map $f$.

\begin{example}\label{ex:illustrating the map f}
Suppose we have the $01$-word
\setcounter{MaxMatrixCols}{20}
\[
\begin{matrix}
& x_1 & x_2 & x_3
& x_4 & x_5 & x_6 
& x_7 & x_8 & x_9 
& x_{10} & x_{11} & x_{12} 
& x_{13} & x_{14} & x_{15} 
& x_{16} & x_{17} \\ 
x= & 1 & 0 & 0 & 1 & 0 & 1 & 0 & 1 & 1 & 1 & 1 & 1 & 0 & 1 & 1 & 1 & 0
& \in X_{18}.
\end{matrix}
\]
Then 
$\alpha(x)=\{1,2,3,6,7,9,12,14,16,17\}$, $\beta(x) 
= \{4,5,8,10,11,13,15,18\}$ and 
\begin{equation}
\label{eq:ex:T in U18}
T = f(x) = 
\raisebox{.05in}
{\begin{ytableau}
1 & 2 & 3 & 6 & 7 & 9 & 12 & 14 & 16 & 17 \\
4 & 5 & 8 & 10 & 11 & 13 & 15 & 18
\end{ytableau} }
\in U_{18}.
\end{equation}

For example, the letter $x_{13}=0$ tells us that $n+1-13=6 \in \alpha(x)$, while the maximal block $x_8 \cdots x_{12}$ of $1$s tells us that
\begin{itemize}
\item[$\star$] $n+1-j \in \beta(x)$ for all $j \in \{8,8+1,8+3\}$, so $\{11, 10, 8 \} \subseteq \beta(x)$
and
\item[$\star$] $n+1-j \in \alpha(x)$ for all $j \in \{8+2,8+4\}$, so $\{ 9, 7\} \subseteq \alpha(x)$. 

\end{itemize}
\end{example}

Next, we illustrate the inverse map $g$. 

\begin{example}
Suppose we have $T\in U_{18}$ as given in~\eqref{eq:ex:T in U18}, and let $\beta = \{18,15,13,11,10,8,5,4\}$ denote the second row of $T$.
\begin{itemize}
    \item[$\star$] $z = 18\in \beta$ and $z-1\notin \beta$. Thus $x_1 = 1$.
    \item[$\star$] $z = 15\in \beta$ and $z-1\notin \beta$. Thus $x_4=1$.
    \item[$\star$] $z = 13\in \beta$ and $z-1\notin \beta$. Thus $x_6=1$.
    \item[$\star$] $z = 11\in \beta$ and $z-1,z-3\in \beta$, thus $k = 2$. Therefore $x_8=x_9=x_{10}=x_{11}=x_{12}=1$.
    \item[$\star$] $z = 5\in \beta$ and $z-1\in \beta$, thus $k = 1$. Therefore $x_{14}=x_{15}=x_{16}=1$.
\end{itemize}
Therefore we get $g(T) = 10010101111101110$, and this is exactly the word $x$ from Example~\ref{ex:illustrating the map f}.
\end{example}

The bijection developed above, between uncrowded tableaux $U_n$ and the set $X_n$ of $01$-words having odd run-lengths of $1$s, allows us to enumerate the set $U_n$, and related subsets. We present those results in the following corollary. Note that we must adjust for indexing in parts \eqref{cor:itm:oeisA077865} and \eqref{cor:itm:oeisA006053} of the result, because both of the sequences referenced there are for binary words of length $n$, not $n-1$. 

\begin{corollary}
\begin{enumerate}[(a)]
\item \label{cor:itm:oeisA028495}
The number of uncrowded tableaux in $U_n$ are counted by the sequence~
\cite[A028495]{oeis}, which is known to count the $01$-words in $X_n$. Below is a table for the first few entries of $|U_n|$.
\begin{center}
\begin{tabular}
{c||p{.25in}|p{.25in}|p{.25in}|p{.25in}|p{.25in}|p{.25in}|p{.25in}|p{.25in}|p{.25in}|p{.25in}}
\raisebox{0in}[.2in][.1in]{}$n$ & 1 & 2 & 3 & 4 & 5 & 6 & 7 & 8 & 9 & 10\\
\hline
\raisebox{0in}[.2in][.1in]{}$|U_n|$ & 1 & 2 & 3 & 6 & 10 & 19 & 33 & 61 & 108 & 197
\end{tabular}
\end{center}

\item \label{cor:itm:oeisA077865}
The number of $2$-row uncrowded tableaux in $U_n$, which is $U_n-1$, are counted by the sequence~
\cite[A077865]{oeis}, which is known to count the $01$-words in $X_n$, not including the all-$0$s word.

\item \label{cor:itm:oeisA006053}
The number of uncrowded tableaux in $U_n$ having $n$ in the second row are counted by the sequence~
\cite[A006053]{oeis}, which is known to count the $01$-words in $X_n$ that start with the letter $1$.
Below is a table for the first few entries for the sequence.
\begin{center}
\begin{tabular}
{c||p{.25in}|p{.25in}|p{.25in}|p{.25in}|p{.25in}|p{.25in}|p{.25in}|p{.25in}|p{.25in}|p{.25in}}
\raisebox{0in}[.2in][.1in]{}$n$ & 1 & 2 & 3 & 4 & 5 & 6 & 7 & 8 & 9 & 10\\
\hline
\raisebox{0in}[.2in][.1in]{}$a(n)$ & 0 & 1 & 1 & 3 & 4 & 9 & 14 & 28 & 47 & 89
\end{tabular}
\end{center}
\end{enumerate}
\end{corollary}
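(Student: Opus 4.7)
The plan is to derive all three enumerations directly from the bijection $f \colon X_n \to U_n$ established in Proposition~\ref{prop:f is a bijection}, after identifying each subset of $U_n$ in the statement with a natural subset of $X_n$.

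For part~(a), the bijection gives $|U_n|=|X_n|$ immediately, and $X_n$ is by definition the set of binary words of length $n-1$ whose maximal runs of $1$s all have odd length, which is the defining description of OEIS sequence A028495. The displayed table for small $n$ serves as a sanity check that the indexing matches.

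For part~(b), I would note that the only $1$-row tableau in $U_n$ is the row $12\cdots n$, which under the construction of $f$ is the image of the all-$0$s word in $X_n$. Removing this single element from both sides of the bijection gives $|U_n|-1$ as the count of $2$-row uncrowded tableaux of size $n$, and matches A077865, which is precisely $|X_n|$ with the all-$0$s word excluded.

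For part~(c), the task is to show that $n\in\RowTwo(f(x))$ if and only if $x_1=1$; then the bijection $f$ restricts to a bijection from $\{x\in X_n:x_1=1\}$ onto $\{T\in U_n:n\in\RowTwo(T)\}$, and the count matches A006053. To verify the characterization, I would unpack the definition of $\beta(x)$: an entry $n$ in $\beta(x)$ can only arise as $n+1-i$ for some index $i$ that begins a maximal run of $1$s, since the other indices $j=i+1,i+3,\ldots$ appearing in $\beta(x)$ satisfy $j\ge 2$ and thus give $n+1-j\le n-1$. The equation $n+1-i=n$ forces $i=1$, and position $1$ begins a maximal run of $1$s exactly when $x_1=1$. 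Conversely, if $x_1=0$, then $n\in\alpha(x)$ (and the $\alpha$-contributions from interior run positions $j\ge 3$ likewise cannot equal $n$), so $n\notin\RowTwo(f(x))$.

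The only place where any real argument is required is the claim in part~(c), and even there the work reduces to observing which index in the construction of $\beta(x)$ can equal $1$; no combinatorial input is needed beyond the already-established bijection.
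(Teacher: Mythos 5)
Your proposal is correct and follows the same route the paper intends: all three counts are read off from the bijection $f\colon X_n\to U_n$ of Proposition~\ref{prop:f is a bijection}, with the all-$0$s word accounting for the $1$-row tableau in part~(b). Your verification in part~(c) that $n\in\RowTwo(f(x))$ forces $i=1$ in the term $n+1-i$ (since the other $\beta$-indices $j$ satisfy $j\ge i+1\ge 2$) is exactly the small check the paper leaves implicit, and it is carried out correctly.
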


\section*{Acknowledgements}
The authors would like to thank the 2021--2022 Research Community in Algebraic Combinatorics program at ICERM, through which this research took place. We thank the organizers and staff for putting together this invigorating and inspiring workshop series. 
The authors are also grateful to Carolina Benedetti for helpful discussions. 
This work also benefited from computation using {\sc SageMath}~\cite{sage}. Finally, we thank the anonymous reviewers whose suggestions helped improve and clarify this paper.

\subsection*{Data Availability} Data sharing is not applicable to this article as no datasets were generated or analyzed during the current study.

\subsection*{Conflict of Interest} The authors declare that they have no conflicts of interest.

\end{document}